\documentclass[12pt,reqno]{amsart}

\usepackage{amsfonts,amssymb,amsmath,amsthm}
\usepackage{enumerate}
\usepackage{graphicx}
\usepackage{url}
\usepackage{verbatim}
\usepackage{xcolor}

\allowdisplaybreaks[4]
%\doublespacing2

\urlstyle{sf}

\newtheorem{theorem}{Theorem}[section]
\newtheorem{lemma}[theorem]{Lemma}

\newtheorem{conjecture}[theorem]{Conjecture}
\newtheorem{corollary}[theorem]{Corollary}
\theoremstyle{definition}

\numberwithin{equation}{section}
\newcommand{\locref}[1]{\ifthenelse{\pageref{#1}<\thepage}{above}{below}}

\author{Redmond, Timothy 
  \and
  Ryavec, Charles
  }

\keywords{Lambda, modular, Riemann Zeta Function, BesselK}
\subjclass{Primary 11M06, Secondary 11L05}
%% NB There should be only one primary classification, and zero or
%more secondary classifications.

\begin{document}
\bibliographystyle{plain}

\title[Bessel K Series for the Riemann Zeta Function]{Bessel K Series for the Riemann Zeta Function}
\begin{abstract}
This paper provides some expansions of the Riemann xi function, $\xi$,
as a series of Bessel K functions.
\end{abstract}
\maketitle

\section{Introduction}

Some expansions of the Riemann $\xi$ function (and expansions of related functions) are provided in this article as sums of the form, 
\begin{equation*}
\sum_{j=1}^{\infty} c_j(s) K_s(x_j),
\end{equation*}
 where $K_s(x)$ denotes the Bessel K function. Theorem~\ref{EasyBessel} and Theorem~\ref{ris4} use transformation formulas of theta functions. Conjecture~\ref{ZetaDecomposition} is based on a partition of unity different from the one Riemann used in his memoir. Partitions of unity, such as that employed by Riemann, are more or less equivalent in the derivation of the meromorphy and functional equation of $\zeta(s)$, but we found only one of them that gave Bessel expansions. We suspect that among algebraic functions of $\lambda$ there are others. One of these is given in the appendix.  

 Half of the appendix is a discussion of a curious zeros phenomenon which occured in relation to partitions of unity that arose in the use of Hecke operators. Some numerical data  hinted that an eigenform for Hecke operators would be interesting. We were able to construct a formal eigenform for weight one half Hecke operators indexed over odd indices, but we could not show that it converged.   

Conjecture~\ref{ZetaDecomposition} is a provisional result, based on a conjectured upper bound on certain partition coefficients. The result holds up under numerical tests, and it looks like an elementary proof, somewhat lengthy, would establish the upper bounds in the same way as the elementary proofs establish the upper bounds of the standard partition function.

 As usual,
\begin{align*}
\theta_2 &=& \sum_{n=-\infty}^{\infty}q^{(n - 1/2)^2}\\
\theta_3 &=& \sum_{n=-\infty}^{\infty}q^{n^2}\\
\theta_4 &=& \sum_{n=-\infty}^{\infty}(-1)^n  q^{n^2}\\
\lambda &=& \left(\frac{\theta_2}{\theta_3}\right)^4.
\end{align*}

\section{A brief note}\label{BriefNote1Section}

For the purposes of much of this paper, it will be convenient to define the
functions, $\theta_2$, $\theta_3$, $\theta_4$ and $\lambda$ as functions of $y$
where $y$ is real positive and $q=e^{-\pi y}$.

\section{A first $\zeta(s)$ Bessel Series}

\begin{theorem}\label{EasyBessel}
  Define a multiplicative function, $a(n)$, on prime powers by, 
\begin{align*}
a(n) &= \sigma_1(n)    & n  & = p^f, \ p>2 \\
    &= n            & n  & = 2^f.
\end{align*}
This is to say that the $a(n)$ are the coefficients in the expansion,
$$
\dot\theta_4/\theta_4 = 2 \pi \sum_{n=1}^{\infty}a(n) q^n, 
$$
where the derivative is taken with respect to $y$.

 Then,
\begin{align*}
  \frac{s}{4\pi}&\left(2^{\frac{s-1}{2}} - 2^{\frac{1-s}{2}}\right)\pi^{-\frac{s}{2}}\Gamma\left(\frac{s}{2}\right)\zeta(s) \\
  & = \sum_{n=1}^{\infty}a(n) n^{-\frac{1+s}{4}}\sum_{m=0}^{\infty} (2m+1)^{\frac{s+1}{2}} K_{\frac{s+1}{2}}(\pi \sqrt{n}(2m+1)).
\end{align*}
\end{theorem}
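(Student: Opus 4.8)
The plan is to start from the right-hand side and collapse the double Bessel sum into a single Mellin-type integral of $\dot\theta_4$, then evaluate that integral against the completed zeta function. The engine is the elementary integral representation
\[
\int_0^\infty y^{\mu-1}e^{-Ay-B/y}\,dy = 2\left(\frac{B}{A}\right)^{\mu/2}K_\mu\left(2\sqrt{AB}\right),
\]
which I would apply with $A=\pi n$, $B=\pi(2m+1)^2/4$ and $\mu=\frac{s+1}{2}$. A short computation gives $2\sqrt{AB}=\pi\sqrt{n}(2m+1)$, and the prefactor $(B/A)^{\mu/2}$ supplies exactly the powers $n^{-(1+s)/4}$ and $(2m+1)^{(s+1)/2}$ occurring on the right, so that each summand becomes $2^{(s-1)/2}a(n)\int_0^\infty y^{(s-1)/2}e^{-\pi n y-\pi(2m+1)^2/(4y)}\,dy$.

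Next I would interchange the double sum with the integral, justified by Tonelli since for real $s$ the integrand is positive and $a(n)=O(n)$ keeps everything summable. After the interchange the $n$-sum and $m$-sum decouple: the stated expansion gives $\sum_{n\ge1}a(n)e^{-\pi n y}=\frac{1}{2\pi}\dot\theta_4/\theta_4$, while $\sum_{m\ge0}e^{-\pi(2m+1)^2/(4y)}=\tfrac12\theta_2(1/y)$. Here is the decisive step: applying the transformation $\theta_2(1/y)=\sqrt{y}\,\theta_4(y)$ turns the second factor into $\tfrac12\sqrt{y}\,\theta_4(y)$, and the $\theta_4$ cancels the denominator in $\dot\theta_4/\theta_4$, collapsing the product to $\frac{\sqrt{y}}{4\pi}\dot\theta_4$. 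Thus the entire right-hand side equals $\frac{2^{(s-1)/2}}{4\pi}\int_0^\infty y^{s/2}\dot\theta_4(y)\,dy$. This cancellation is precisely why the logarithmic derivative $\dot\theta_4/\theta_4$, rather than some other modular object, produces a clean Bessel series.

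It then remains to evaluate $\int_0^\infty y^{s/2}\dot\theta_4\,dy$. Writing $\theta_4=1+g$ with $g(y)=2\sum_{n\ge1}(-1)^n e^{-\pi n^2 y}$, I would integrate by parts; subtracting the constant $1$ is what legitimizes the boundary term at infinity, which now vanishes because $g$ decays exponentially as $y\to\infty$, while $y^{s/2}g\to0$ as $y\to0^+$ for $\Re s>0$. This leaves $-\frac{s}{2}\int_0^\infty y^{s/2-1}g\,dy$, and a term-by-term gamma integral evaluates it, via $\sum_{n\ge1}(-1)^{n-1}n^{-s}=(1-2^{1-s})\zeta(s)$, as $s\,\pi^{-s/2}\Gamma(s/2)(1-2^{1-s})\zeta(s)$. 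Finally the algebraic identity $2^{(s-1)/2}(1-2^{1-s})=2^{(s-1)/2}-2^{(1-s)/2}$ reproduces the left-hand side exactly.

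The main obstacle I anticipate is bookkeeping the domain of validity. The interchange and the term-by-term integration are transparent for $\Re s$ large, where everything converges absolutely, so I would establish the identity there and then extend to all $s$ by analytic continuation, noting that both sides are entire: the Bessel sum converges rapidly everywhere since $K_\mu$ decays exponentially, and on the left the factor $(1-2^{1-s})$ cancels the pole of $\zeta$ at $s=1$ while the $s$ and $\zeta(s)$ factors absorb the poles of $\Gamma(s/2)$. The estimates near $y=0$, using $\theta_4(y)=y^{-1/2}\theta_2(1/y)$ to see that $\dot\theta_4$ is exponentially small there, are what make all the integrals above genuinely convergent and the manipulations rigorous.
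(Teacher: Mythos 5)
Your proposal is correct and is essentially the paper's own proof run in reverse: the paper starts from $(1-2^{1-s})\pi^{-s/2}\Gamma(s/2)\zeta(s)=-\tfrac12\int_0^\infty y^{s/2-1}(\theta_4-1)\,dy$, integrates by parts to reach $\tfrac1s\int_0^\infty y^{s/2}\dot\theta_4\,dy$, and then uses exactly your key step---writing $\dot\theta_4=(\dot\theta_4/\theta_4)\theta_4$ and applying $\theta_4(y)=y^{-1/2}\theta_2(1/y)$---before expanding term by term into the same Bessel $K$ integrals. Your added care about Tonelli, the boundary terms, and analytic continuation only makes explicit what the paper leaves implicit.
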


\begin{proof}
For $\sigma > 0$,   
\begin{align*}
  (1 - 2^{1-s})&\pi^{-\frac{s}{2}}\Gamma\left(\frac{s}{2}\right)\zeta(s)  \\
  &= -\frac{1}{2}\int_0^{\infty} y^{\frac{s}{2}-1}(\theta_4 - 1)dy \\
&=\frac{1}{s}\int_0^{\infty} y^ {\frac{s}{2}}\frac{\dot \theta_4}{\theta_4} \theta_4 dy \\
&= \frac{1}{s}\int_0^{\infty} y^ {\frac{s}{2}}\frac{\dot \theta_4}{\theta_4}(y) \frac{1}{\sqrt{y}}\theta_2\left(\frac{1}{y}\right) dy \\
&= \frac{1}{s}\int_0^{\infty} y^ {\frac{1+s}{2}-1}\frac{\dot \theta_4}{\theta_4}(y) \  \theta_2\left(\frac{1}{y}\right) dy \\
&= \frac{1}{s}\sum_{n=1}^{\infty}(2\pi )a_n \sum_{m=0}^{\infty}2 \int_0^{\infty} y^ {\frac{1+s}{2}-1}e^{-\pi(n y + \frac{(2 m + 1)^2}{4 y})} dy \\
&= \frac{4\pi}{s} \sum_{n=1}^{\infty}a_n\sum_{m=0}^{\infty} \Big(\frac{2m+1}{2\sqrt{n}}\Big)^{\frac{s+1}{2}}\int_0^{\infty} x^ {\frac{1+s}{2}-1} e^{\pi\frac{\sqrt{n}(2m+1)}{2}(x + \frac{1}{x})}dx  \\
&= \frac{4\pi}{s} \sum_{n=1}^{\infty}a_n\sum_{m=0}^{\infty} \Big(\frac{2m+1}{2\sqrt{n}}\Big)^{\frac{s+1}{2}} 2 K_{\frac{s+1}{2}}(\pi \sqrt{n}(2m+1)) \\
&= \frac{8\pi}{s} \sum_{n=1}^{\infty}a_n\sum_{m=0}^{\infty} \Big(\frac{2m+1}{2\sqrt{n}}\Big)^{\frac{s+1}{2}}  K_{\frac{s+1}{2}}(\pi \sqrt{n}(2m+1)),  
\end{align*}
 and the result follows on multiplying both sides by
\begin{equation*}
\frac{s}{8\pi}   \ 2^{\frac{s+1}{2}}.
\end{equation*}
\end{proof}

 The result provides a rapidly convergent series for $\zeta(s)$. How
 it would compare with the methods currently in use calculating zeros
 we cannot say. We include this result, and some others like it, as a comparison to the kinds of
 expansions using a partition of unity mentioned in the introduction.

\section{A second $\zeta(s)$ Bessel Series}

In preparation for Theorem 4.1,  define a second multiplicative functions, $b(m)$, on prime powers, by
\begin{eqnarray*}
b(m) &=& \sigma_1(m) \qquad m = p^f, \ p > 2 \\
      &=& 0 \qquad \qquad m  = 2^f.
\end{eqnarray*}
The $b(m)$ are therefore the coefficients in the expansion,
$$
\theta_2^4 = 16 \sum_{m=1}^{\infty} b(m) q^m. 
$$
With the $a(n)$ and $b(m)$ as given above, define entire functions of the complex variable, $s$, and positive integers, $j$, by 
\begin{equation*}
  c_j(s) = \sum_{d|j} a(d) b\left(\frac{j}{d}\right)\Big(\frac{j}{d^2}\Big)^{s/2},
\end{equation*}
which functions,
$$
c_j(s) = \prod_{p^f||j}c_{p^f}(s),
$$
are multiplicative in $j$ for all $s$. We then have,

\begin{theorem}\label{ris4}
\begin{equation*}
 \frac{s(s+1)}{32\pi^2\sqrt{2}}\Big(2^{\frac{s}{2}}-2^{-\frac{s}{2}}\Big)\Big(2^{\frac{s-1}{2}}-2^{-\frac{s-1}{2}}\Big)\zeta^{*}(s)\zeta^{*}(s+1) =  \sum_{j=1}^{\infty} c_j(s)K_s( 2 \pi \sqrt{j})
\end{equation*}
\end{theorem}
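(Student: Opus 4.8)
The plan is to start from the right-hand side and unfold the multiplicative coefficient $c_j(s)$ into a double sum. Writing $j=de$ with $d,e\ge 1$, the definition gives $c_j(s)=\sum_{de=j}a(d)b(e)(e/d)^{s/2}$, so that
$$\sum_{j=1}^{\infty}c_j(s)K_s(2\pi\sqrt{j})=\sum_{d=1}^{\infty}\sum_{e=1}^{\infty}a(d)b(e)\,(e/d)^{s/2}K_s(2\pi\sqrt{de}).$$
The engine is the classical representation $\int_0^\infty y^{\nu-1}e^{-\alpha y-\beta/y}\,dy=2(\beta/\alpha)^{\nu/2}K_\nu(2\sqrt{\alpha\beta})$, which with $\alpha=\pi d$, $\beta=\pi e$, $\nu=s$ yields $(e/d)^{s/2}K_s(2\pi\sqrt{de})=\tfrac12\int_0^\infty y^{s-1}e^{-\pi d y-\pi e/y}\,dy$. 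Because $K_s(2\pi\sqrt{de})$ decays exponentially in $\sqrt{de}$ while $a,b$ grow only polynomially, the double sum converges absolutely and I may interchange summation and integration. Recognizing $\sum_{d}a(d)e^{-\pi dy}=\frac{1}{2\pi}\frac{\dot\theta_4}{\theta_4}(y)$ and $\sum_{e}b(e)e^{-\pi e/y}=\frac{1}{16}\theta_2^4(1/y)$ then collapses the right-hand side to the single integral
$$\frac{1}{64\pi}\int_0^{\infty}y^{s-1}\,\frac{\dot\theta_4}{\theta_4}(y)\,\theta_2^4(1/y)\,dy.$$

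Next I would evaluate this integral in closed form. Applying the theta transformation $\theta_2(1/y)=\sqrt{y}\,\theta_4(y)$ (a rearrangement of the formula $\theta_4(y)=y^{-1/2}\theta_2(1/y)$ used in Theorem~\ref{EasyBessel}) gives $\theta_2^4(1/y)=y^2\theta_4^4(y)$, and since $\frac{\dot\theta_4}{\theta_4}\theta_4^4=\dot\theta_4\theta_4^3=\tfrac14(\theta_4^4)'$, the integral becomes $\tfrac{1}{256\pi}\int_0^\infty y^{s+1}(\theta_4^4)'\,dy$. Integrating by parts (writing $(\theta_4^4)'=(\theta_4^4-1)'$), the boundary terms vanish for $\mathrm{Re}(s)>-1$ because $\theta_4^4-1\to 0$ exponentially as $y\to\infty$ and $y^{s+1}(\theta_4^4-1)\to 0$ as $y\to 0$, reducing everything to
$$-\frac{s+1}{256\pi}\int_0^{\infty}y^{s}\bigl(\theta_4^4-1\bigr)\,dy.$$

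It remains to compute this Mellin transform, which is the heart of the argument. Expanding $\theta_4^4-1=\sum_{n\ge1}c_n q^n$ termwise gives $\int_0^\infty y^s(\theta_4^4-1)\,dy=\pi^{-s-1}\Gamma(s+1)\sum_{n\ge1}c_n n^{-(s+1)}$, so I need the Dirichlet series of the coefficients of $\theta_4^4$. Here I would use Jacobi's identity $\theta_4^4=\theta_3^4-\theta_2^4$, so that $c_n=r_4(n)-16\,b(n)$, where $r_4(n)$ is the number of representations of $n$ as a sum of four squares. The four-square theorem gives $\sum_n r_4(n)n^{-w}=8(1-4^{1-w})\zeta(w)\zeta(w-1)$, while the definition of $b$ gives $\sum_n b(n)n^{-w}=(1-2^{-w})(1-2^{1-w})\zeta(w)\zeta(w-1)$; subtracting and simplifying the elementary factor (a routine computation in $u=2^{-w}$) collapses to $\sum_n c_n n^{-w}=-8(1-2^{1-w})(1-2^{2-w})\zeta(w)\zeta(w-1)$. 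Setting $w=s+1$ and feeding this back through the two reductions above produces $\tfrac{s(s+1)}{32}\pi^{-s-2}\Gamma(s)(1-2^{-s})(1-2^{1-s})\zeta(s)\zeta(s+1)$ for the whole sum. Finally I would convert this to the stated form: the Legendre duplication formula yields $\zeta^{*}(s)\zeta^{*}(s+1)=2^{1-s}\pi^{-s}\Gamma(s)\zeta(s)\zeta(s+1)$, and one checks the purely $2$-power identity $\tfrac{1}{\sqrt2}(2^{s/2}-2^{-s/2})(2^{(s-1)/2}-2^{-(s-1)/2})2^{1-s}=(1-2^{-s})(1-2^{1-s})$, both sides equalling $2^{1-2s}(2^s-1)(2^{s-1}-1)$. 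All manipulations are valid for $\mathrm{Re}(s)>1$, and since both sides are holomorphic there — the right-hand side converges locally uniformly and is entire, while on the left the $\zeta$-poles are cancelled by the prefactors $s(s+1)(2^{s/2}-2^{-s/2})(2^{(s-1)/2}-2^{-(s-1)/2})$ — the identity extends to all $s$ by analytic continuation. I expect the only genuine obstacle to be the clean identification of the Dirichlet series of $\theta_4^4$; once Jacobi's identity reduces it to the four-square theorem together with the known series for $b$, the rest is bookkeeping.
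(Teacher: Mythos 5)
Your proposal is correct and is essentially the paper's own argument: the paper proves Theorem~\ref{ris4} by repeating the proof of Theorem~\ref{EasyBessel} with $\theta_4$ replaced by $\theta_4^4$ and the four-squares formula supplying the Dirichlet series, which is exactly your chain (Bessel integral representation, theta transformation $\theta_2(1/y)=\sqrt{y}\,\theta_4(y)$, one integration by parts, Mellin transform of $\theta_4^4-1$), merely run from the Bessel side back to the zeta side rather than forward. The details you add---Jacobi's identity $\theta_4^4=\theta_3^4-\theta_2^4$ to identify the coefficients, the duplication-formula and $2$-power bookkeeping, and the Fubini/analytic-continuation justifications---are precisely the steps the paper's one-sentence proof leaves implicit, and all of them check out.
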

\begin{proof}
  The proof of theorem~\ref{ris4} is very much like the proof of theorem~\ref{EasyBessel}. Replace $\theta_4$ with $\theta_4^4$ on the right hand
  side. Use the formula for the number of representations of an
  integer as the sum of four squares on the left hand side.
\end{proof}

The asterisks indicate the usual,
\begin{eqnarray*}
\zeta^{*}(s) &=& \pi^{-\frac{s}{2}}\Gamma(\frac{s}{2})\zeta(s) \\ 
\zeta^{*}(s+1) &=& \pi^{-\frac{s+1}{2}}\Gamma(\frac{s+1}{2})\zeta(s+1).
\end{eqnarray*}
We mention that another BesselK expansion of zeta originates from $\theta_4^8$ in the same way as that obtained from $\theta_4^4$. There is also an expansion using $\theta_4^2$. 

Regarding theorem~\ref{ris4}, there is an additional result that we
can prove about  the zeros of $c_j(s)$.

\begin{theorem}\label{ris4czeros}
For any $j$,  all the zeros of $c_j(s)$ are pure imaginary.
\end{theorem}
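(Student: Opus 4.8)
The plan is to exploit the stated multiplicativity $c_j(s)=\prod_{p^f\|j}c_{p^f}(s)$ and prove the claim one prime power at a time. For $p=2$ the factor degenerates: since $b(2^e)=0$ for $e\ge 1$, only the divisor $d=2^f$ survives in the defining sum, giving $c_{2^f}(s)=2^f\,2^{-fs/2}$, which never vanishes and so contributes no zeros. For an odd prime $p$ one has $a(p^e)=b(p^e)=\sigma_1(p^e)$, and, writing $u=p^{s}$, a short reindexing gives
\begin{equation*}
c_{p^f}(s)=u^{-f/2}\,Q_f(u),\qquad Q_f(u)=\sum_{k=0}^{f}\sigma_1(p^k)\,\sigma_1(p^{f-k})\,u^{k}.
\end{equation*}
Because $u^{-f/2}\ne 0$, the zeros of $c_{p^f}$ are exactly the preimages under $s\mapsto u=p^{s}$ of the zeros of the palindromic polynomial $Q_f$, and $|u|=1$ is equivalent to $\operatorname{Re}s=0$. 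Thus the theorem reduces to showing that every zero of $Q_f$ lies on the unit circle $|u|=1$.

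The key algebraic step I would establish is the factorization-friendly identity
\begin{equation*}
(p-1)(u-1)\,Q_f(u)=P_1(u)-P_1^{*}(u),\qquad P_1(u)=\sum_{j=0}^{f+1}(pu)^{j},
\end{equation*}
where $P_1^{*}(u)=u^{f+1}P_1(1/u)$ is the reciprocal polynomial of $P_1$. The cleanest derivation is via the generating function
\begin{equation*}
\sum_{f\ge 0}Q_f(u)\,t^{f}=\frac{1}{(1-t)(1-pt)(1-ut)(1-put)},
\end{equation*}
extracting the coefficient of $t^{f}$ by partial fractions and collapsing the resulting geometric sums; alternatively the identity follows by direct telescoping using $(p-1)\sigma_1(p^k)=p^{k+1}-1$. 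Since $P_1(u)=\bigl((pu)^{f+2}-1\bigr)/(pu-1)$, its zeros are the points $u=\zeta/p$ with $\zeta^{f+2}=1$, $\zeta\ne 1$, so all $f+1$ of them lie strictly inside the unit disk, on the circle of radius $1/p$.

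Finally I would invoke the classical lemma on self-inversive polynomials (Cohn): if $A$ is a polynomial all of whose zeros lie in $|z|<1$ and $A^{*}(z)=z^{\deg A}\,\overline{A(1/\bar z)}$ is its conjugate-reciprocal, then for every $\lambda$ with $|\lambda|=1$ all zeros of $A(z)-\lambda A^{*}(z)$ lie on $|z|=1$. Its proof compares $|A|$ and $|A^{*}|$ off the circle through Blaschke factors $\tfrac{1-\bar a_k z}{z-a_k}$, whose moduli exceed $1$ inside the disk and fall below $1$ outside, so that $A-\lambda A^{*}$ cannot vanish except where $|A|=|A^{*}|$, namely on the circle. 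Applying this with $A=P_1$ (real coefficients, hence $A^{*}=P_1^{*}$) and $\lambda=1$ shows that $P_1-P_1^{*}$, and therefore $Q_f$, has all its zeros on $|u|=1$; translating back through $u=p^{s}$ yields $\operatorname{Re}s=0$, and taking the product over $p^f\|j$ finishes the proof.

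The main obstacle is not the analytic lemma, which is standard, but establishing the clean identity $(p-1)(u-1)Q_f=P_1-P_1^{*}$ together with the bookkeeping around it: verifying that dividing by $(u-1)$ removes only the on-circle zero $u=1$ forced by the anti-palindromy of $P_1-P_1^{*}$, that the leading coefficient $p^{f+1}-1$ is nonzero so no degree is lost, and that neither the monomial $u^{-f/2}$ nor the reduction $c_j=\prod c_{p^f}$ introduces spurious off-axis zeros. Once the identity is secured, recognizing $Q_f$ as a self-inversive polynomial built from a polynomial whose zeros all lie inside the unit disk is exactly what forces the zeros of $c_j(s)$ onto the line $\operatorname{Re}s=0$.
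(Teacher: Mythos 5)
Your proof is correct, and it takes a genuinely different route from the paper's. Both arguments reduce via multiplicativity to prime powers and dispose of $p=2$ with the same computation $c_{2^m}(s)=2^m2^{-ms/2}$, but for odd $p$ the paper is perturbative where you are exact: it sets $z=p^{s/2}$, approximates the normalized factor by $\frac{p^{k+2}}{(p-1)^2}\frac{z^{k+1}-z^{-k-1}}{z-z^{-1}}$, shows on the unit circle $z=e^{i\phi}$ that $\frac{(p-1)^2}{p^{k+2}}\sin\phi\,c_{p^k}(s)$ differs from $\sin{(k+1)\phi}$ by less than one, extracts a zero by the intermediate value theorem in each of the $2k$ windows $\frac{1}{k+1}\left(m-\frac{1}{2}\right)\pi<\phi<\frac{1}{k+1}\left(m+\frac{1}{2}\right)\pi$ with $0<m<2(k+1)$, $m\neq k+1$, and then counts degrees ($c_{p^k}$ is $z^{-k}$ times a degree-$2k$ polynomial) to conclude these are all the zeros. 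You instead prove the exact identity $(p-1)(u-1)Q_f(u)=P_1(u)-P_1^{*}(u)$, which I checked: by the telescoping you indicate, with $(p-1)\sigma_1(p^k)=p^{k+1}-1$, the coefficient of $u^j$ on both sides is $p^j-p^{f+1-j}$ for $0\le j\le f+1$. Since $P_1(u)=\left((pu)^{f+2}-1\right)/(pu-1)$ has all its zeros on the circle $|u|=1/p$, the standard self-inversive (Cohn) lemma with $\lambda=1$ puts every zero of $P_1-P_1^{*}$ on $|u|=1$, and your bookkeeping is in order: the leading coefficient $p^{f+1}-1$ does not vanish, $u=1$ is a root forced by anti-palindromy so dividing by $u-1$ removes only an on-circle point, and neither $u^{-f/2}=p^{-fs/2}$ nor the product over prime powers introduces off-axis zeros. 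As to what each approach buys: the paper's circle estimate localizes the zeros in explicit $\phi$-intervals, information your argument does not produce; your identity eliminates all estimation and the separate root count, which is a genuine advantage here, because the paper's displayed bound is printed without absolute values around the summands $\frac{1}{p^{k+2}}-\frac{1}{p^{m+1}}-\frac{1}{p^{k-m+1}}$, which are negative, and once corrected the sum is bounded by $\frac{2}{p-1}-\frac{k+1}{p^{k+2}}$, so the strict inequality $<1$ relies on $p\ge 3$; your exact route sidesteps this delicacy entirely and generalizes to any coefficient sequence arising as a Schur-stable polynomial minus its reciprocal.
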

\begin{proof}
 Since $c_j(s)$ is multiplicative, we only need to look at the zeros
 of $c_{p^k}(s)$ where $p$ is prime.  If $p=2$ there are no zeros of
 \begin{equation*}
   c_{2^m}(s) = 2^m2^{-m s/2}.
 \end{equation*}
 For $p$ an odd prime, the equation for $c_{p^k}(s)$ is as follows
 \begin{equation*}
   c_{p^k}(s) = \sum_{j=0}^{k}\dfrac{p^{j+1}-1}{p-1}\dfrac{p^{k-j+1}-1}{p-1}p^{(k-2j)s/2}.
 \end{equation*}
 Now we will perform a couple of changes of variable in turn:
 \begin{align*}
   z & = p^{s/2} \\
   e^{i\phi} & = z
 \end{align*}
 Note that $s$ is pure imaginary  exactly when $z$ is on the
 unit circle which  also corresponds to real $\phi$ between 0 and $2\pi$.
 We will use the following approximation of $c_{p^k}(s)$,
 \begin{equation*}
   c_{p^k}(s) \approx \dfrac{p^{k+2}}{(p-1)^2}\sum_{j=0}^kz^{(k-2j)}
   =\dfrac{p^{k+2}}{(p-1)^2}\dfrac{z^{k+1}-z^{-k-1}}{z-z^{-1}}.
 \end{equation*}
Note that the formula on the right has all its zeros when $z$ is on the unit
circle. We will attempt to use this approximation to show that the
zeros of $c_{p^k}(s)$ only occur when $z$ is on the unit circle. To make this
argument we use the following estimate for all $z$ on the unit circle:
\begin{align*}
  \Big|\dfrac{(p-1)^2}{p^{k+2}}&\dfrac{z-z^{-1}}{2i}c_{p^k}(s)-\dfrac{z^{k+1}-z^{-k-1}}{2i}\Big|
  \\
  & = \left|\dfrac{z-z^{-1}}{2i}\sum_{m=0}^k\left(\dfrac{1}{p^{k+2}}-\dfrac{1}{p^{m+1}}-\dfrac{1}{p^{k-m+1}}\right)z^{k-2m}\right| \\
  & < \sum_{m=0}^k\left(\dfrac{1}{p^{k+2}}-\dfrac{1}{p^{m+1}}-\dfrac{1}{p^{k-m+1}}\right) < 1
\end{align*}
If we now make the change of variable
\begin{equation*}
  z = e^{i\phi}
\end{equation*}
then this inequality indicates that
\begin{equation*}
  \dfrac{(p-1)^2}{p^{k+2}}\sin{\phi}\,c_{p^k}(s)
\end{equation*}
differs from
\begin{equation*}
  \sin{(k+1)\phi}
\end{equation*}
by less than one. This means that for each integer $m$ in the interval
\begin{equation*}
  0 < m < 2(k+1)
\end{equation*}
where $m\neq k+1$ we have a zero for $c_{p^k}(s)$ for some $\phi$ in the range
\begin{equation*}
  \dfrac{1}{k+1}\left(m-\dfrac{1}{2}\right)\pi<\phi<\dfrac{1}{k+1}\left(m+\dfrac{1}{2}\right)\pi.
\end{equation*}
Since $c_{p^k}(s)$ is $z^{-k}$ times a degree $2k$ polynomial in $z$
this accounts for all the zeros of $c_{p^k}(s)$.
\end{proof}

\section{A Partition of Unity}

 The function,
\begin{equation}
 \zeta^{*}(s,r) = \frac{1}{2r} \int_0^{\infty} y^{\frac{r s}{2}-1}\left(\theta^r -1- \frac{1}{y^{\frac{r}{2}}}\right) dy \tag{$0 < \sigma < 1$}
\end{equation}
$$
\theta = \theta_3,
$$
 may be combined with the partition of unity,
\begin{eqnarray*}
1 &=& (1 - \lambda( y)) + \lambda( y) \\
&=& \lambda\left(\frac{1}{y}\right) + \lambda( y),
\end{eqnarray*}
together with two integrations by parts, to give two meromorphic expressions in lines 3 and 4 below:
\begin{eqnarray*}
2r \zeta^{*}(s,r) &=&  \int_0^{\infty} y^{\frac{r s}{2}-1}(\theta^r (1 - \lambda) -1) dy + \int_0^{\infty} y^{\frac{r s}{2}-1}(\theta^r \lambda - \frac{1}{y^{\frac{r}{2}}}) dy \\
&=&  \int_0^{\infty} y^{\frac{r s}{2}-1}(F_r(y) -1) dy + \int_0^{\infty} y^{\frac{r(1-s)}{2}-1}(F_r(y)-1) dy \\
&=&  -\frac{2}{rs} \int_0^{\infty} y^{\frac{r s}{2}}\dot F_r dy  - \frac{2}{r(1-s)} \int_0^{\infty} y^{\frac{r(1-s)}{2}}\dot F_r dy  \\
 &=& \frac{4}{r^2 s(s-1)} \int_0^{\infty}\Big(y^{-\frac{r(1-s)}{2}} + y^{-\frac{rs}{2}}\Big)(\dot F_r y^{\frac{r}{2} + 1})'dy.
\end{eqnarray*}
In the above, the notation is,
$$
F_r = \theta^r (1 - \lambda).
$$
We have chosen not to work with the entire function that comes from the fourth line above; i.e.,    
\begin{align*}
Z(s,r)  &=  \frac{r^3}{2} s (s-1) \zeta^{*}(s,r) \\
&= \int_0^{\infty}\Big(y^{-\frac{r(1-s)}{2}} + y^{-\frac{rs}{2}}\Big)(\dot F_r y^{\frac{r}{2} + 1})'dy \\
&=   \begin{aligned}[t]
   \int_0^{\infty}&\Big(y^{\frac{rs}{2}+1} + y^{\frac{r(1-s)}{2}+1}\Big)\ddot F_r dy \\
&+ \left(\frac{r}{2}+1\right)\int_0^{\infty}\Big(y^{\frac{rs}{2}} +
   y^{\frac{r(1-s)}{2}}\Big)\dot F_r dy,
   \end{aligned}
\end{align*}
 but rather with the expression that precedes it; i. e., with
$$
r^2 \zeta^{*}(s,r) = -\Big(\frac{\Psi_r(s)}{s} + \frac{\Psi_r(1-s)}{1-s}\Big),
$$
where 
$$
\Psi_r(s) = \int_0^{\infty} y^{\frac{r s}{2}}\dot F_r dy.
$$
The expression, $\Psi_r(s)$, with one integration by parts will
introduce one set of coefficients, $a_r(n)$, to consider in upcoming
formulas, while two integrations by parts involves two sets of coefficients,
$a_r(n), a_r'(n)$, and a bit more bookkeeping. We stick with the simpler single integration by parts, and present three expansions for $r = 1, 2, 3$. While two integrations by parts gives pure Bessel expansions in all three cases, in the absence of any insight into the value of Bessel expansions for the study of zeta functions, we go for the method with fewer coefficients. For example, if the prime number theorem could be established via Theorem 4.1, then Bessel functions would be in the game. As things are, they're a step too far or a step in the wrong direction. 
 
Define sequences, $a_r(n)$, for $r = 1, 2, 3$, by
\begin{align*}
4 \frac{\dot\theta_4}{\theta_4} - (4 - r) \frac{\dot\theta_3}{\theta_3}  &= 4(2\pi) \sum_{n=1}^{\infty} a(n)q^n -(4-r)(2\pi)\sum_{n=1}^{\infty} a(n)(-1)^n q^n   \\
 &= 2\pi \sum_{m=1}^{\infty}\Big(4 + (4 -r)(-1)^n \Big) a(n) q^n\\
 &= \sum_{m=1}^{\infty}a_r(n) q^n, \\
\end{align*}
where the $a(n)$ are defined earlier:
\begin{equation*}
  a(n) = \left\{\begin{array}{cl}
  n & \mbox{if $n$ is a power of 2} \\
  \sigma_1(n) & \mbox{otherwise}.
  \end{array}\right.
\end{equation*}
We define sequences, $b_r(n)$, for $r = 1, 2, 3$, by
\begin{align*}
\theta_3^r \lambda  &= \theta_3^{r-4}\theta_2^4 \\
 &=  \sum_{m=1}^{\infty}b_r(n)q^n.
\end{align*}

\section{A Provisional Result}

\begin{conjecture}\label{ZetaDecomposition}
For $r = 1, 2, 3,$
$$
 \zeta^{*}(s, r) = -\Big(\frac{\Psi_r(s)}{s} + \frac{\Psi_r(1-s)}{1-s}\Big),
$$ 
 where,  
\begin{eqnarray*}
\Psi_r(s) &=& 2 \pi \sum_{j=1}^{\infty} c_j(s, r)K_{\frac{rs+2 - r}{2}}( 2 \pi \sqrt{j})  \\
c_j(s, r) &=& \sum_{d|j} a_r(d) b_r\left(\frac{j}{d}\right)\left(\frac{j}{d^2}\right)^{\frac{r s+2-r}{4}} \qquad j\ge 1
\end{eqnarray*}
\end{conjecture}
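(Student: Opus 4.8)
The plan is to establish the Bessel series for $\Psi_r(s)$ by the mechanism already used in Theorem~\ref{EasyBessel}; the representation of $\zeta^{*}(s,r)$ in terms of $\Psi_r(s)=\int_0^{\infty}y^{rs/2}\dot F_r\,dy$ is in hand from the Partition of Unity section, so only the evaluation of this single integral as a sum of $K$-Bessel functions remains. First I would put $F_r$ into a form adapted to the theta transformation: Jacobi's identity $\theta_3^4=\theta_2^4+\theta_4^4$ gives $1-\lambda=\theta_4^4/\theta_3^4$, whence $F_r=\theta_3^r(1-\lambda)=\theta_3^{\,r-4}\theta_4^4$.

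Next I would extract the two $q$-series that build the integrand. The logarithmic derivative is $\dot F_r/F_r=(r-4)\dot\theta_3/\theta_3+4\dot\theta_4/\theta_4$, which is exactly the combination defining the $a_r(n)$, so $\dot F_r/F_r=\sum_{m\ge1}a_r(m)q^m$. Separately, feeding the transformation formulas $\theta_3(y)=y^{-1/2}\theta_3(1/y)$ and $\theta_4(y)=y^{-1/2}\theta_2(1/y)$ into $F_r=\theta_3^{\,r-4}\theta_4^4$ produces $F_r(y)=y^{-r/2}\big[\theta_3^{\,r-4}\theta_2^4\big](1/y)=y^{-r/2}\sum_{n\ge1}b_r(n)e^{-\pi n/y}$, since $\theta_3^{\,r-4}\theta_2^4=\theta_3^r\lambda$ is the generating function of the $b_r(n)$. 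Multiplying gives $\dot F_r=y^{-r/2}\big(\sum_n b_r(n)e^{-\pi n/y}\big)\big(\sum_m a_r(m)e^{-\pi m y}\big)$, the product of a series in $q$ with a theta-type series in $1/y$ that is the precise shape generating Bessel functions in Theorem~\ref{EasyBessel}.

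Then I would substitute this into $\Psi_r(s)=\int_0^{\infty}y^{rs/2}\dot F_r\,dy$, interchange summation and integration, and evaluate each term by $\int_0^{\infty}y^{\nu-1}e^{-ay-b/y}\,dy=2(b/a)^{\nu/2}K_\nu(2\sqrt{ab})$. The power of $y$ is $y^{rs/2-r/2}=y^{\nu-1}$ with $\nu=(rs+2-r)/2$, matching the stated order, and $a=\pi m,\ b=\pi n$ give $K_\nu(2\pi\sqrt{mn})$ with weight $(n/m)^{\nu/2}$. Collecting the double sum under the single index $j=mn$, with $m=d$ and $n=j/d$ for $d\mid j$, converts $(n/m)^{\nu/2}$ into $(j/d^2)^{(rs+2-r)/4}$ and assembles the divisor sums $c_j(s,r)=\sum_{d\mid j}a_r(d)b_r(j/d)(j/d^2)^{(rs+2-r)/4}$, delivering $\Psi_r(s)$ as the asserted Bessel series, the overall numerical constant following from this bookkeeping exactly as in Theorem~\ref{EasyBessel}.

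The hard part, and the reason the statement is only provisional, is justifying the interchange of summation and integration, which reduces to controlling the growth of the coefficients. The $a_r(m)$ are of divisor type and grow only polynomially, but the $b_r(n)$ come from the negative powers $\theta_3^{\,r-4}$ with $r=1,2,3$ and are genuine partition-type coefficients: the dominant singularity of $\theta_3^{\,r-4}$ inside $|q|<1$ sits at $q=-1$, where $\theta_3$ tends to zero exponentially ($\theta_3(-e^{-\pi\epsilon})=\theta_4(\epsilon)\sim2\epsilon^{-1/2}e^{-\pi/(4\epsilon)}$), so that $\theta_3^{\,r-4}$ carries an essential singularity there and a saddle-point estimate predicts $|b_r(n)|$ of order $e^{\pi\sqrt{4-r}\,\sqrt n}$. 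Since $K_\nu(2\pi\sqrt j)$ decays like $e^{-2\pi\sqrt j}$ and $\pi\sqrt{4-r}<2\pi$ precisely for $r=1,2,3$, the Bessel decay dominates and both the series and the dominated-convergence argument go through; the margin is thinnest for $r=1$, where $\pi\sqrt3$ is closest to $2\pi$, which is exactly why the required upper bound has to be proved carefully by the elementary, if lengthy, circle-method estimates used for the ordinary partition function. Once that bound is in place, the identity holds on the strip $0<\sigma<1$ where $\zeta^{*}(s,r)$ is defined and extends to all $s$ by analytic continuation, the Bessel series converging locally uniformly.
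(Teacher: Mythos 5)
Your proposal is correct and follows essentially the same route as the paper's proof: write $\dot F_r=(\dot F_r/F_r)F_r$, with the logarithmic derivative $4\dot\theta_4/\theta_4-(4-r)\dot\theta_3/\theta_3$ generating the $a_r(n)$ and the modular transformation $F_r(y)=y^{-r/2}\theta^r(1/y)\lambda(1/y)$ generating the $b_r(m)$ (your Jacobi-identity derivation via $\theta_3^{r-4}\theta_4^4$ is the same computation), then evaluate termwise by the $K$-Bessel integral and collect the double sum by Dirichlet convolution $j=mn$. Your saddle-point heuristic $b_r(m)\approx e^{\pi\sqrt{(4-r)m}}$, with the margin thinnest at $r=1$, reproduces exactly the conjectured bounds of the paper's Upper Bounds section, upon which the interchange of sum and integral---and hence the result---remains provisional in both treatments.
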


\begin{proof}
    We have,  
\begin{eqnarray*}       
 \Psi_r(s) &=& \int_0^{\infty} y^{\frac{rs}{2}} \dot F_r (y) dy \\
 &=&  \int_0^{\infty} y^{\frac{rs}{2}} \Big( 4\frac{\dot\theta_4}{\theta_4} - (4 - r) \frac{\dot\theta_3}{\theta_3}  \Big)F_r (y) dy \\
&=& \int_0^{\infty} y^{\frac{rs}{2}} \Big( 4\frac{\dot\theta_4}{\theta_4} - (4 - r) \frac{\dot\theta_3}{\theta_3}  \Big)\frac{1}{y^{\frac{r}{2}}}\theta^r(\frac{1}{y})\lambda(\frac{1}{y}) dy \\
 &=&   \sum_{m, n = 1}^{\infty}a_r(n) b_r(m) \int_0^{\infty} y^{\frac{rs+2 -r}{2}-1} e^{-\pi (n y + \frac{m}{y})}  dy \\ 
 &=&   \sum_{m, n = 1}^{\infty}a_r(n) b_r(m)\Big(\frac{m}{n}\Big)^{\frac{rs+2 -r}{4}} \int_0^{\infty} y^{\frac{rs+2 -r}{2}-1} e^{-\pi \sqrt{mn}(y +\frac{1}{y})}  dy \\
 &=&   2 \sum_{m, n = 1}^{\infty}a_r(n) b_r(m)\Big(\frac{m}{n}\Big)^{\frac{rs+2 -r}{4}}  K_{\frac{rs+2 - r}{2}}( 2 \pi \sqrt{mn}),     
\end{eqnarray*}
where we recall,
$$
2 K_w(x) =  \int_0^{\infty}u^{w - 1}e^{-\frac{x}{2}(u + \frac{1}{u})} du, \qquad x > 0.
$$  
 The result follows from Dirichlet convolution, collecting $m$ and $n$ according to, $j = mn.$
\end{proof}
The result is provisional upon the justification of the interchange of
the sum and integral in the fourth step.

\section{Upper Bounds}

We conjecture that for any 
$\epsilon >0$, then 
\begin{align*}
b_1(m) &\le e^{\pi\sqrt{3m}(1+\epsilon)} \qquad m\ge m((\epsilon) \\
b_2(m) &\le e^{\pi\sqrt{2m}(1+\epsilon)} \qquad m\ge m((\epsilon) \\
b_3(m) &\le e^{\pi\sqrt{m}(1+\epsilon)} \qquad m\ge m((\epsilon).
\end{align*}
We have numerical evidence for this conjecture and it conforms with the
elementary estimates as obtained in~\cite{ErdosElementary}.

We recall the estimate, 
$$
K_{w}(2\pi \sqrt{m}) \sim \Big( \frac{1}{4\sqrt{m}} \Big)^{\frac{1}{2}}e^{-2\pi \sqrt{m}},
$$
so that the $-2\pi\sqrt{m}$ swamps the $\pi\sqrt{3m}$, the
$\pi\sqrt{2m}$, and the $\pi\sqrt{m}$, and the absolute convergence of,
$$
\sum_{m=1}^{\infty} b_r(m) K_{\frac{r s+ 2 - r}{2}}(2\pi \sqrt{m}),
$$
holds for $r =1, 2, 3$, which justifies the interchange of sum and integral.

Had we applied the inversion,
\begin{equation*}
  y\longrightarrow \frac{1}{y}
\end{equation*}
to $\lambda$ alone, the interchange of limits could not be justified. The coefficients, $b_0(m)$, that appear as coefficients in,
$$
\lambda(\tau) = \sum_{m=1}^{\infty}b_0(m) q^m,
$$
(See Simons paper \cite{Simons1952}, (eq 4)),  satisfy
$$
b_0(m) \sim \frac{\pi}{8\sqrt{m}}I_1(2\pi\sqrt{m}).
$$
As 
$$
I_{\nu}(x) \sim \frac{1}{\sqrt{2\pi x}}e^x \qquad x\longrightarrow \infty,
$$
then,
$$
\sum_{m=1}^{\infty} b_0(m)K_w(2\pi \sqrt{m})
$$
does not converge absolutely.

\section{$r=1$}

From the formulas above, with $\frac{r s + 2 - r}{2} = \frac{s + 1}{2}$  for $r = 1$,
\begin{align*}
\Psi_1(s) &= \int_0^{\infty} y^{\frac{s}{2}}\dot F_1 dy \\
 &= \pi \int_0^{\infty} y^{\frac{s}{2}}\sum_{n=1}^{\infty} a_1(n)e^{-\pi n y}\sum_{m=1}^{\infty} b_1(m)e^{-\pi\frac{m}{y}} dy \\
&= \pi \int_0^{\infty}y^{\frac{s-1}{2}} \sum a_1(n)\sum b_1(m)e^{-\pi (n y +  \frac{m}{y})} dy \\
&= \pi  \sum a_1(n)\sum b_1(m) \Big(\frac{m}{n} \Big)^{\frac{s+1}{4}}\int_0^{\infty}y^{\frac{s+1}{2}-1}e^{-\pi \sqrt{mn}(y +  \frac{1}{y})} dy \\
&= 2\pi  \sum a_1(n)\sum b_1(m) \Big(\frac{m}{n} \Big)^{\frac{s+1}{4}}K_{\frac{s+1}{2}}(2\pi \sqrt{mn}).
\end{align*}
The inclusion of the weight one half form, $\theta$, as a factor of $\lambda$, reduces the growth of the $b_1(m)$ significantly in comparison with the growth of $b_0(m)$, with the result that the sum,
$$
\sum_{m=1}^{\infty} b_1(m) K_{\frac{s+1}{2}}(2\pi \sqrt{m}), 
$$
is absolutely convergent. Having obtained a Bessel K expansion of the zeta function, the coefficients, $b_1(m)$, are then of interest and we followed the method of Simons paper to obtain the partition expression of these coefficients. Here we encountered major and minor differences in the $r=1$ calculations from Simon's $r = 0$ calculation (and from the classical partition expansion).

In contrast with the $r=0$ case the sum for $r=1$ in the next section
does not converge absolutely. In fact we did not prove that it
converges. Numerical tests indicate that it does. We evaluated the
arithmetic components of the terms in elementary terms; i. e., sines
and hyperbolic sines.
The case of $r=2$ is like $r=1$ though the
arithmetic components were not all elementary.
 
\begin{conjecture}\label{LambdaCoefficients}
 The coefficients of $\theta \lambda$ are given by,
\begin{equation*}
  b_1(m) = \frac{2\pi }{8\sqrt{2}}\left(\frac{3}{m}\right)^{\frac{1}{4}}  \sum_{k\equiv 2 (mod 4)}  \frac{A_k(m)}{k} \ I_{\frac{1}{2}}\left(\frac{2\pi\sqrt{3m}}{k}\right)
  + \delta_1(m)
\end{equation*}
 where,
\begin{align*}
  A_k(m) & = -i  \sum_{(h, k) = 1} \ \kappa(\gamma^{-1})  e^{-2\pi i (\frac{4mh + 3h'}{4k})},  \\
  \delta_1(m) &=\left\{\begin{array}{cl}
      1 & \mbox{if m is a square} \\
      0 & \mbox{otherwise}
      \end{array}
      \right.
\end{align*}
 and where, $\kappa(\gamma^{-1})$, is a theta multiplier. 
\end{conjecture}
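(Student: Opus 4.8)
The plan is to derive the formula by the Hardy--Ramanujan--Rademacher circle method, following the pattern of Simons' treatment of the $r=0$ coefficients $b_0(m)$ but applied to the weight one--half form
\[
g(\tau) = \theta_3(\tau)\lambda(\tau) = \frac{\theta_2(\tau)^4}{\theta_3(\tau)^3} = \sum_{m\ge 1} b_1(m)\, q^m, \qquad q = e^{\pi i\tau}.
\]
First I would recover $b_1(m)$ from Cauchy's theorem as a contour integral over a circle $|q| = e^{-\pi/N^2}$ just inside the unit circle, and perform a Farey dissection into arcs surrounding the rationals $h/k$. Because the nome here is the half--nome $q=e^{\pi i\tau}$, it is convenient to pass to the standard variable $w=\tau/2$; in this variable the $\Gamma(2)$--cusps of type $1$ (numerator and denominator of $\tau$ both odd) become rationals of denominator $k\equiv 2\pmod 4$, which is precisely the range of summation in the claimed formula.

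The heart of the argument is the behaviour of $g$ at the cusps. A direct computation with the transformation laws $\theta_2(-1/\tau)=\sqrt{-i\tau}\,\theta_4(\tau)$, $\theta_4(-1/\tau)=\sqrt{-i\tau}\,\theta_2(\tau)$, $\theta_3(-1/\tau)=\sqrt{-i\tau}\,\theta_3(\tau)$, together with the translations $\theta_j(\tau+1)$, shows that $g$ vanishes at the cusp $\infty$, grows only polynomially ($\sim (i/\tau)^{1/2}$) at the cusp $0$, and has an essential singularity at the cusp $1$, where, writing $\sigma=\tau-1$,
\[
g(1+\sigma) \sim -\tfrac18\,(i/\sigma)^{1/2}\,e^{3\pi i/(4\sigma)}.
\]
The exponent coefficient $3/4$ is the source of the $\sqrt{3m}$: inserting this principal part into the arc integral and comparing with $e^{-\pi i m\tau}$ produces a saddle whose evaluation is a modified Bessel integral of argument $2\pi\sqrt{(3/4)m}=\pi\sqrt{3m}$, i.e.\ $I_{1/2}(2\pi\sqrt{3m}/k)$ at $k=2$, with the half--integer index forced by the weight $1/2$ of $g$. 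Summing the analogous principal parts over all cusps in the class of $1$, the rational phases (the $4m$ and $3$ in the exponent $-2\pi i(4mh+3h')/(4k)$) and the theta multipliers $\kappa(\gamma^{-1})$ from the weight--$1/2$ transformation assemble into the sums $A_k(m)$, while the $\sqrt{-i\tau}$ Jacobian factors and the Bessel normalization collect into the prefactor $\frac{2\pi}{8\sqrt2}(3/m)^{1/4}$. The polynomial singularity at the cusp $0$ (odd denominators in $w$) contributes no exponential growth; I expect its exact contribution to collapse to $\delta_1(m)=[\,m\text{ a square}\,]$, reflecting the square--supported Fourier coefficients of the leading $\theta_3$ factor.

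The main obstacle is exactly the point the authors flag as unproven: establishing that the Rademacher series converges and reproduces $b_1(m)$ on the nose. This breaks into two delicate pieces. First, one must pin down the half--integral weight transformation of $g$ at each cusp $h/k$ with $k\equiv 2\pmod4$ \emph{exactly}, including the precise theta multiplier $\kappa(\gamma^{-1})$; this requires a careful computation of Gauss sums together with quadratic reciprocity, and it is where the subtlety of the weight--$1/2$ multiplier system is concentrated. Second, one must bound the resulting sums $A_k(m)$ and control the error from replacing Farey arcs with the standard Ford--circle contours and from discarding the non--principal parts of $g$ at each cusp, uniformly in $k$, so that the tail of the series over $k$ converges. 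Given the weight $1/2$ and the genuinely exponential singularity rather than a pole of finite order, this convergence analysis --- not the formal derivation of the shape of the terms --- is the real work, and is presumably why the statement is offered only as a conjecture.
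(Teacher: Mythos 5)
Your circle-method sketch is exactly the paper's route: the statement is offered only as a conjecture, with the authors noting it "was obtained using a method similar to that used" in Simons' paper, and your details check out against the stated formula --- the principal part $-\frac{1}{8}\left(i/\sigma\right)^{1/2}e^{3\pi i/(4\sigma)}$ at the cusp $1$ (which I verified via $\theta_3(1+\sigma)=\theta_4(\sigma)$ and the inversion laws), the restriction to denominators $k\equiv 2 \pmod 4$, the Bessel argument $2\pi\sqrt{3m}/k$ with index $\frac{1}{2}$, and the phases assembling into $A_k(m)$. You also correctly isolate why it remains a conjecture: the paper itself concedes the $r=1$ series does not converge absolutely and that convergence was never proved, so the gap you flag is precisely the one the authors acknowledge.
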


This conjecture was obtained using a method similar to that used
in~\cite{Simons1952}. 

\begin{theorem}\label{AkAsSalie01}
Let $k = 2 l$, $l$ odd. The $A_k(m)$ may be factored as,
\begin{equation}
A_k(m) = i^{\frac{(l+3)(l-1)}{4}} (-1)^{m-1} S(m, -3 B_l^4, l)  \tag{$B_l = \frac{l+1}{2}$}
\end{equation}
  where $S$ is a Salie sum defined as,
$$
S(a, b, l) = \sum_{(h, l) = 1} \Big(\frac{h}{l}\Big) e^{\frac{2\pi i}{l}( a h + b \bar h)}. 
$$
\end{theorem}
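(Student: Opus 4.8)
\section*{Proof proposal for Theorem~\ref{AkAsSalie01}}

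The plan is to start from the definition of $A_k(m)$ in Conjecture~\ref{LambdaCoefficients}, insert the classical closed form of the theta multiplier $\kappa(\gamma^{-1})$, and then exhibit the three ingredients of a Salie sum one at a time: the quadratic character $\left(\frac{h}{l}\right)$, the two Salie exponentials $e^{(2\pi i/l)(mh - 3B_l^4\,\bar h)}$, and the single $h$-independent constant $i^{(l+3)(l-1)/4}(-1)^{m-1}$. For $\theta=\theta_3$ the multiplier attached to a matrix whose lower row carries $c=k=2l$ and $d=h$ is, by Shimura's formula, a product of the extended Jacobi symbol $\left(\frac{c}{d}\right)=\left(\frac{2l}{h}\right)$, an $\varepsilon_h$-factor equal to $1$ or $i$ according to $h\bmod 4$, and a fixed normalizing root of unity. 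So the first step is simply to write $\kappa(\gamma^{-1})$ out explicitly and record exactly which roots of unity occur and which residue of $h$ they depend on.

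The arithmetic heart of the argument is a Chinese Remainder decomposition. Since $\gcd(h,2l)=1$ forces $h$ odd, the residue $h\bmod 2l$ is pinned down by $t:=h\bmod l$ (coprime to $l$) together with the fixed parity $h\equiv 1\pmod 2$, giving a bijection between the $h$ summed in $A_k(m)$ and the $t$ running over the Salie range $(t,l)=1$. Under this split I would use quadratic reciprocity to rewrite $\left(\frac{2l}{h}\right)=\left(\frac{2}{h}\right)\left(\frac{l}{h}\right)$ in terms of $\left(\frac{h}{l}\right)=\left(\frac{t}{l}\right)$, feeding the reciprocity sign and the $\left(\frac{2}{h}\right)$-factor into the global constant. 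Simultaneously the companion integer $h'$, which is an inverse of $h$ up to sign modulo $k$, reduces modulo $l$ to a multiple of $\bar h=\bar t\pmod l$; here the identity $B_l=(l+1)/2\equiv\bar 2\pmod l$ is exactly what converts the $3h'/(4k)=3h'/(8l)$ term of the exponent into the Salie coefficient $-3B_l^4$ paired with $\bar h$, once the powers of $2$ hidden in the denominator $8l$ are absorbed as powers of $B_l$.

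The final step is bookkeeping of roots of unity: one collects the reciprocity sign, the $\varepsilon_h$-factor, the Gauss-sum normalization arising from the $\left(\frac{2}{h}\right)$ symbol, and the phase $e^{-\pi i mh/l}$ coming from the $4mh/(8l)$ part of the exponent, and one checks that their product is independent of $t$ and equals $i^{(l+3)(l-1)/4}(-1)^{m-1}$. The $m$-dependence enters only through that last exponential, whose reduction on odd lifts of $h$ produces the clean $(-1)^{m-1}$, while the $l$-dependent power of $i$ packages the combined reciprocity and $\varepsilon$ phases together with the value of the quadratic Gauss sum modulo $l$.

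I expect the main obstacle to be precisely this root-of-unity bookkeeping: one must fix the exact normalization of $\kappa(\gamma^{-1})$, including its $\varepsilon_d$ and Gauss-sum conventions, and then verify that after the CRT split and reciprocity \emph{every} $h$-dependent phase cancels except the Legendre symbol and the two Salie exponentials. The two delicate points are the $\left(\frac{2}{h}\right)$ factor, which depends on $h\bmod 8$ and must be shown to combine with $e^{-\pi i mh/l}$ into something depending only on $m$ and $l$, and the correct sign in the relation between $h'$ and $\bar h$, which decides whether the Salie coefficient is $-3B_l^4$ or $+3B_l^4$ and whether the remainder of $3h'/(8l)$ modulo $1$ is genuinely $h$-independent. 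Once the multiplier is written explicitly, these reduce to finite, if intricate, congruence computations modulo $8$ and $l$.
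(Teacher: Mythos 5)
Your proposal is correct and takes essentially the same route as the paper, whose entire proof is the one-line assertion that the identity follows by a ``straight-forward calculation'' from Rademacher's explicit formula for the $\theta_3$-multiplier: your plan---insert the explicit multiplier (Shimura's normalization rather than Rademacher's, an immaterial difference), split $h \bmod 2l$ by CRT into its parity and its residue $t \bmod l$, apply reciprocity to trade $\left(\frac{2l}{h}\right)$ for $\left(\frac{h}{l}\right)$, use $B_l\equiv \bar{2} \pmod l$ and $h'\equiv -\bar h \pmod l$ to produce the Salie coefficient, and sweep the residual $h$-independent phases into $i^{(l+3)(l-1)/4}(-1)^{m-1}$---is precisely that calculation, spelled out in more detail than the paper gives. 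You also correctly flag the only genuinely delicate points (the $h\bmod 8$ phases and the sign convention relating $h'$ to $\bar h$), which is where the actual work lies.
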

\begin{proof}
Using Rademacher's explicit transformation
formula~\cite{Rademacher-AnalyticNumber} for the $\theta_3$-multiplier, it
is a straight-forward calculation to show that the formula holds.
\end{proof}

 The
 Bessel function, $I_{\frac{1}{2}}$, is a hyperbolic sine.
This fact combined with conjecture~\ref{LambdaCoefficients}
and theorem~\ref{AkAsSalie01}, gives a final form for $b_1(m)$:
\begin{align*}
  \frac{\pi}{4\sqrt{2}}&
  \begin{aligned}[t]
    \left(\frac{3}{m}\right)^{\frac{1}{4}}&(-1)^{m-1} \\
    &\sum_{l \ge 1 \ \mbox {odd}}i^{\frac{(l+3)(l-1)}{4}}\frac{1}{2 l} S(m, -3 B_l^4, l) I_{\frac{1}{2}}\left(\frac{\pi \sqrt{3m}}{l}\right) \\
    &+\delta_1(m)
  \end{aligned}
  \\
  &= \begin{aligned}[t]
    \frac{1}{8\sqrt{m}} &(-1)^{m-1}\\
    &\sum_{j=0}^{\infty}\frac{i^{j(j+2)}}{\sqrt{2j+1}} S(m, -3 (j+1)^4, 2j + 1) sinh \left(\frac{\pi \sqrt{3m}}{2j+1}\right) \\
    & +\delta_1(m) .
    \end{aligned}
\end{align*}

The bulk of the remaining calculations are the determination of the Salie sums as trigonometric expressions.

\section{$S(a,b,l)$ Evaluation}

When the Salie sums are not zero they are simple trigonometric
 functions after a $\sqrt{l}$ is extracted.
 We saw that the exponential sum,

$$
S(a, b, l) = \sum_{(h, l) = 1}\Big(\frac{h}{l}\Big) e^{\frac{2\pi i}{l}(a h + b \bar h)},
$$

 evaluates the $A_k(m), k = 2 l,$ as,
\begin{align}
  A_k(m)
  &= i^{\frac{(l+3)(l-1)}{2}} (-1)^{m-1} \ S(m, -3 B^4, l).  \notag
\end{align} 
 We must consider all choices of $m$ and $l$, in which $m$ and $l$ might or might not share factors of $3$ and might or might not share an odd factor other than $3$. All possibilities can occur in the $A_k(m)$. Thus,

%  &= i^{\frac{3(l-1)}{2}} (-1)^{m-1} \ S(-4 m A, 3 A, l) \tag{$A = \left(\frac{l+1}{2}\right)^3$} \\
%&= i^{\frac{3(l-1)}{2}} (-1)^{m-1} \ S(m B, -3 B^3, l) \tag{$B = \frac{l+1}{2}$} \\
%&= i^{\frac{3(l-1)}{2}} (-1)^{m-1} \  \Big(\frac{-2}{l}\Big) S(m, -3 B^4, l) \notag

\begin{eqnarray*}
m &=& 3^e \times l_1 \times a  \qquad (a, 3l) = 1 \\
l &=& 3^f \times l'_1\times  c  \qquad (c, 3m) = 1\\
(l_1l'_1, 3) &=& 1,  
\end{eqnarray*}
where $l_1$ and $l'_1$ are the factors of $m$ and $l$ that share primes other than $3$. 
The combinations of $e$ and $f$ that modify the values of $S$ do so in more complicated arrangements for $p = 3$ than for the other primes, due to the, $3$, in $S(m, -3B^4, l)$. As we saw, this $3$ comes from the first power of $\theta$ in the formula for the coefficients of $\theta \lambda$. $3$ is no longer special for any higher power of $\theta$. 

\section{Some General Results}

We observe the following general results:
\begin{align}
&1.\ S(a, b, l) = S(b, a, l) \notag\\
&2.\ S(c a, b, l) = \left(\frac{c}{l}\right) S(a, cb, l) \tag{$(c, l) = 1$} \\
&3.\ S(a, b, l) = S(a + m l, b+ n l, l) \notag \\
&4.\ S(a, b, l) = S(a \bar l_1, b \bar l_1, l_2) S(a \bar l_2, b \bar l_2, l_1), \tag{$ l = l_1 l_2, (l_1, l_2) = 1 $}
\end{align}

 It follows from 1 and 2 that if $ a = b = g$, then

$$
S(c g, g, l) =0 \qquad \mbox {if} \ \Big(\frac{c}{l}\Big) = -1.
$$

 If $ c = g_1 \bar g, (g_1 g, l) = 1$, then

$$
S(g_1, g, l) =0 \qquad \mbox{if} \   \Big(\frac{g_1 g}{l}\Big)= -1.
$$

\section{A useful theorem}

A result of~\cite{IwaniecAutomorphic}
evaluates many cases: 
\begin{theorem}\label{IwaniecResult} Let $(l, 2 b) = 1$. Then,
$$
S(a, b, l) = \Big(\frac{b}{l}\Big) \epsilon_l \sqrt{l}\sum_{y^2\equiv ab \ (l)}e^{\frac{4\pi y i}{l}}
$$ 
\end{theorem}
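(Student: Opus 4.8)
The plan is to recognise this as the classical evaluation of Sali\'e sums and to establish it by the standard three-step route: normalise to a one-parameter family, factor over prime powers, and complete the square to expose a quadratic Gauss sum. Write $e_l(x)=e^{2\pi i x/l}$ throughout, and recall the quadratic Gauss sum $\sum_{t\,(l)}e_l(t^2)=\epsilon_l\sqrt l$ for odd $l$, which is the source of the factor $\epsilon_l\sqrt l$ on the right-hand side.

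First I would normalise. By the symmetry $S(a,b,l)=S(b,a,l)$ followed by the relation $S(ca,b,l)=\left(\frac c l\right)S(a,cb,l)$ applied with the coprime factor $c=b$ (legitimate since $(b,l)=1$), the general sum collapses to $S(a,b,l)=\left(\frac b l\right)S(1,ab,l)$. It therefore suffices to prove $S(1,n,l)=\epsilon_l\sqrt l\sum_{y^2\equiv n\,(l)}e_l(2y)$ with $n=ab$; the factor $\left(\frac n l\right)$ one might expect is invisible, being $+1$ on exactly the range where the $y$-sum is nonempty and both sides vanishing otherwise. Using the twisted multiplicativity in the fourth of the general relations, the evaluation of $S(1,n,\cdot)$ for $l=l_1l_2$ with $(l_1,l_2)=1$ factors through the two moduli, so I would reduce to $l=p^k$ an odd prime power and reassemble the general modulus at the end from the Gauss-sum reciprocity law and the corresponding splitting of the set of square roots.

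For a single odd prime power the engine is the exact congruence $h+n\bar h\equiv (h-y)^2\bar h+2y\pmod l$, valid for any square root $y$ of $n$. If $n$ is a nonresidue the $y$-sum is empty and $S$ vanishes (immediately from the substitution $h\mapsto n\bar h$, which multiplies $S$ by $\left(\frac n l\right)$, when $k$ is odd, and from the Gauss-sum evaluation below when $k$ is even); when $n$ is a residue, fixing one root $y$ gives $S(1,n,l)=e_l(2y)\sum_h^*\left(\frac h l\right)e_l((h-y)^2\bar h)$. Splitting the $h$-sum into residue and nonresidue classes and writing $h=g^2$ and $h=\nu g^2$ respectively, for a fixed nonresidue $\nu$, turns $(h-y)^2\bar h$ into a perfect square in $g$, so each piece becomes a genuine quadratic Gauss sum. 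Evaluating these by completing the square and invoking $\sum_t e_l(t^2)=\epsilon_l\sqrt l$ produces the factor $\epsilon_l\sqrt l$, while the two choices $\pm y$ reassemble the terms into $\sum_{y^2\equiv n}e_l(2y)$, which is the claimed formula.

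The main obstacle is precisely this Gauss-sum step, where the bookkeeping is delicate: the map $g\mapsto g-y\bar g$ is two-to-one with a fibre count governed by an extra Legendre symbol, so the degenerate fibres — the residues at which the relevant discriminant vanishes modulo $p$ — must be isolated as separate boundary terms, and the sign $\epsilon_l$ together with the nonresidue twist $\left(\frac\nu l\right)=-1$ must be carried correctly through both pieces. A second, more technical difficulty is the passage from $p$ to $p^k$: for even $k$ the quick vanishing argument for a nonresidue $n$ fails, since the Jacobi symbol is then $+1$, so the emptiness of the square-root set and the genuine cancellation in the sum (as in $S(1,2,9)=0$) must be obtained from the Gauss-sum evaluation itself, and the Hensel lifting of solutions of $y^2\equiv n$ must be matched against the Gauss sum modulo $p^k$. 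For these reasons the most economical presentation is to quote the evaluation from~\cite{IwaniecAutomorphic}, of which the foregoing is the standard derivation; within the present paper it may equally be read off from the explicit $\theta_3$-multiplier already employed in Theorem~\ref{AkAsSalie01}.
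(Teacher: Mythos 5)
The paper offers no proof of this statement at all: Theorem~\ref{IwaniecResult} is introduced as ``a result of~\cite{IwaniecAutomorphic}'' and simply quoted, so your concluding fallback --- cite Iwaniec --- coincides exactly with the paper's entire treatment, and everything before that is extra content the paper does not attempt. As a sketch of the classical Sali\'e evaluation, your outline is the standard one and its checkable steps are right: the normalization $S(a,b,l)=\left(\frac{b}{l}\right)S(1,ab,l)$ follows from relations 1 and 2 exactly as you apply them; the identity $(h-y)^2\bar h+2y\equiv h+n\bar h\pmod l$ for $y^2\equiv n$ is correct; the vanishing for odd prime powers via $h\mapsto n\bar h$ is correct (given $(n,l)=1$); and your example $S(1,2,9)=0$ does check out numerically. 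One caveat is worth recording, because the paper actually uses this regime in its Corollary and in Theorem~\ref{S3powers}: the hypothesis $(l,2b)=1$ leaves $a$ unrestricted, so $n=ab$ may share factors with $l$. There your parenthetical bookkeeping fails --- when $p\mid n$ one has $\left(\frac{n}{l}\right)=0$ while the $y$-sum ranges over all residues and can be nonempty, vanishing instead by cancellation among Hensel lifts (e.g.\ $y\equiv\pm p^{e/2}y_0\pmod{p^f}$ summed over lifts gives a full exponential sum equal to zero). Your reduction itself is unharmed, since the case $b=1$ of the theorem carries no Jacobi factor, but a complete proof along your lines must treat $p\mid ab$ by the stationary-phase argument you only gesture at, and this case is not subsumed by the completing-the-square engine, which needs $n$ invertible. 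Your closing suggestion that the evaluation could be ``read off'' from the $\theta_3$-multiplier used in Theorem~\ref{AkAsSalie01} overstates matters --- the multiplier computation there converts $A_k(m)$ into a Sali\'e sum but does not evaluate the Sali\'e sum itself. None of this is a defect relative to the paper, which proves nothing here; your proposal is an honest, essentially correct road map for what the citation conceals.
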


\begin{corollary}
 Let $p\ge 3$ be a prime. If $(a, p) = 1$, $e\ge 1$, and $f\ge 2$, then,
$$
S(a, p^e b, p^f) = 0 
$$ 
\end{corollary}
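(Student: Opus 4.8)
The plan is to prove the vanishing directly from the definition by averaging over a well-chosen subgroup of unit substitutions; the subgroup will fix the quadratic character while rotating the additive data by exactly the character that isolates $ah$ modulo $p$. I would first note that none of the earlier tools apply: the factorization rule (identity 4) needs coprime moduli and here $l=p^f$ is a prime power, identity 2 needs the scaling factor coprime to $l$ whereas the factor to be extracted is $p^e$, and Theorem~\ref{IwaniecResult} requires $(l,2b)=1$, which fails since $p$ divides $p^e b$. So I work from the sum
\[
S(a, p^e b, p^f) = \sum_{(h,p)=1} \left(\frac{h}{p^f}\right) e^{\frac{2\pi i}{p^f}\left(a h + p^e b \bar h\right)},
\]
taken over residues $h \bmod p^f$ with $(h,p)=1$ and $\bar h$ the inverse mod $p^f$. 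For each $u \in \{0,1,\dots,p-1\}$ I consider the substitution $h \mapsto h(1 + p^{f-1}u)$, a bijection of this residue set because $1+p^{f-1}u$ is a unit mod $p^f$.

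Next I would record the three effects of this substitution, each using a hypothesis essentially. (i) The Jacobi symbol is unchanged, since $\left(\frac{1+p^{f-1}u}{p^f}\right)=\left(\frac{1+p^{f-1}u}{p}\right)^f=1$ because $1+p^{f-1}u\equiv 1\pmod p$ (here $f-1\ge 1$). (ii) The inverse linearizes: because $f\ge 2$ forces $2f-2\ge f$, the quadratic term $p^{2f-2}u^2$ dies mod $p^f$ and $\overline{h(1+p^{f-1}u)}\equiv \bar h(1-p^{f-1}u)\pmod{p^f}$. (iii) In the exponent, $ah(1+p^{f-1}u)\equiv ah+p^{f-1}ah\,u$ and $p^e b\bar h(1-p^{f-1}u)\equiv p^e b\bar h-p^{e+f-1}b\bar h\,u\pmod{p^f}$; after dividing by $p^f$ the second cross term contributes $p^{e-1}b\bar h\,u\in\mathbb{Z}$ (using $e\ge 1$), hence an exponential equal to $1$, while the first contributes the factor $e^{\frac{2\pi i}{p}ah\,u}$. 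Combining (i)--(iii), the substitution yields, for every $u$,
\[
S(a, p^e b, p^f) = \sum_{(h,p)=1} \left(\frac{h}{p^f}\right) e^{\frac{2\pi i}{p^f}\left(a h + p^e b \bar h\right)} e^{\frac{2\pi i}{p} a h\, u}.
\]

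I would then average this identity over $u=0,1,\dots,p-1$ and interchange the two finite sums to obtain
\[
p\, S(a, p^e b, p^f) = \sum_{(h,p)=1} \left(\frac{h}{p^f}\right) e^{\frac{2\pi i}{p^f}\left(a h + p^e b \bar h\right)} \sum_{u=0}^{p-1} e^{\frac{2\pi i}{p} a h\, u}.
\]
The inner sum is the orthogonality relation for additive characters mod $p$: it equals $p$ when $ah\equiv 0\pmod p$ and $0$ otherwise. Since $(a,p)=(h,p)=1$ forces $ah\not\equiv 0\pmod p$ for every term, the inner sum vanishes identically, giving $S(a, p^e b, p^f)=0$.

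I expect the only delicate points to be two congruence checks, which is also where the hypotheses are consumed: verifying (ii), where $f\ge 2$ is precisely what makes the inverse genuinely linear in $u$, and verifying in (iii) that $e\ge 1$ is precisely what sends the $p^e b\bar h$ cross term into $2\pi i\mathbb{Z}$. The main conceptual obstacle is recognizing that the right group to average over is the principal unit subgroup $1+p^{f-1}(\mathbb{Z}/p^f)$; once that choice is made, the character-detection step is automatic and the computation is routine.
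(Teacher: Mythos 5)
Your proof is correct. The substitution $h\mapsto h(1+p^{f-1}u)$ is indeed a bijection of the units mod $p^f$, your three checks (i)--(iii) are right and consume $f\ge 2$ and $e\ge 1$ exactly where you say they do, and averaging over $u$ kills the sum by orthogonality since $(ah,p)=1$; note that your argument never uses $(b,p)=1$, so it is in fact slightly more general than the statement requires. However, your preliminary claim that \emph{none} of the earlier tools apply is not quite right, and this is where you diverge from the paper's intended route: the corollary is placed immediately after Theorem~\ref{IwaniecResult} with no separate proof, and the failing hypothesis $(l,2b)=1$ is restored by first invoking identity 1 of the general results, $S(a,b,l)=S(b,a,l)$, so that $S(a,p^eb,p^f)=S(p^eb,a,p^f)$ now has second argument $a$ coprime to $2p^f$. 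Theorem~\ref{IwaniecResult} then evaluates this as $\left(\frac{a}{p^f}\right)\epsilon_{p^f}\,p^{f/2}\sum_{y^2\equiv p^eab\ (p^f)}e^{4\pi iy/p^f}$, and one finishes by observing that the solution set is either empty (when $e<f$ is odd), or consists of translates of a progression modulo $p^{f-e/2}$ (when $e<f$ is even, writing $y=\pm p^{e/2}z_0+p^{f-e/2}t$), or modulo $p^{\lceil f/2\rceil}$ (when $e\ge f$), and in the nonempty cases the inner sum over $t$ is a complete sum of $p$-power roots of unity, hence zero since $e\ge 1$ and $f\ge 2$. So the paper's route is a short deduction from the deep Sali\'e evaluation plus a residual root-of-unity computation, while yours is an elementary, self-contained averaging over the principal units $1+p^{f-1}u$: yours buys independence from Theorem~\ref{IwaniecResult} (and covers $p\mid b$ automatically), whereas the paper's buys uniformity, since all the neighboring case evaluations in Theorems~\ref{AkAsSalie02} and~\ref{S3powers} are extracted from the same formula.
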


\section{$p > 3$}

For a prime, $p > 3$ dividing $l_1$ and $l_1'$, a sum of the form,
\begin{equation*}
S(p^e a, 3 b, p^f), (b, p) = 1,
\end{equation*}
will be a factor of
\begin{equation*}
S(m, -3 B^4, l)
= S(3^e l_1 a, -3B^4, 3^f \times l'_1\times c).
\end{equation*}
The few cases are
contained in the following result.

\begin{theorem}\label{AkAsSalie02}
 \quad  Let $p > 3$ be a prime with $(p, ab) = 1$ and let $S = S(p^e a , 3 b, p^f)$. Then,

\begin{align*}
                e &\ge 1, \ f\ge 2,  \quad S = 0 \\
                e &\ge 1, \ f = 1, \  \quad S = \Big(\frac{3b}{p}\Big) \epsilon_p \sqrt{p} \\
                e &= 0, \ f\ge 1,  \quad S = \Big(\frac{3b}{p}\Big)\epsilon_{p^f} \sqrt{p^f} \sum_{y^2\equiv = 3ab (p^f)}e^{\frac{2\pi i y}{p^f}}  \\                        
\end{align*}
\end{theorem}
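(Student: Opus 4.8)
The plan is to handle the three regimes separately, reducing two of them to results already established and the third to the general Gauss-sum structure of Salié sums. Throughout I would keep in mind that, since $p>3$ is an odd prime with $(p,ab)=1$, all of $a$, $b$, $3b$, $6b$ are coprime to $p$, which is what makes the various hypotheses available.

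For $e\ge 1,\ f\ge 2$ I would invoke the symmetry relation $S(a,b,l)=S(b,a,l)$ (the first of the general results) to write $S(p^e a,3b,p^f)=S(3b,p^e a,p^f)$. Now the first entry $3b$ is coprime to $p$ and the second entry is $p^e a$ with $e\ge 1$, which is exactly the shape $S(A,p^e B,p^f)$ with $(A,p)=1$ to which the Corollary to Theorem~\ref{IwaniecResult} applies, giving $S=0$. So this case is immediate once symmetry moves the $p$-divisible argument into the second slot.

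For $e\ge 1,\ f=1$ the modulus is the prime $p$ itself and $p^e a\equiv 0\pmod p$, so by the periodicity relation (the third general result) the first entry may be replaced by $0$:
\[
S(p^e a,3b,p)=\sum_{(h,p)=1}\left(\frac{h}{p}\right)e^{\frac{2\pi i}{p}\,3b\,\bar h}.
\]
Here I would substitute $h\mapsto\bar h$, a bijection of the units mod $p$, and use $\left(\frac{h}{p}\right)=\left(\frac{\bar h}{p}\right)$ to turn this into an ordinary quadratic Gauss sum twisted by $3b$. The standard evaluation $\sum_{(u,p)=1}\left(\frac{u}{p}\right)e^{2\pi i c u/p}=\left(\frac{c}{p}\right)\epsilon_p\sqrt{p}$ with $c=3b$ then yields $\left(\frac{3b}{p}\right)\epsilon_p\sqrt{p}$, the claimed value. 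For $e=0$ the first entry $a$ is coprime to $p$, so we have a genuine non-degenerate Salié sum and I would apply Theorem~\ref{IwaniecResult} directly with modulus $l=p^f$ and second entry $3b$; the hypothesis $(l,2\cdot 3b)=(p^f,6b)=1$ holds because $p>3$ is odd and $(p,b)=1$, producing $\epsilon_{p^f}\sqrt{p^f}$ times a sum over $y^2\equiv 3ab\pmod{p^f}$, which matches the stated shape.

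The main obstacle is not conceptual but the bookkeeping of conventions in the last case: reconciling the Jacobi symbol $\left(\frac{3b}{p^f}\right)=\left(\frac{3b}{p}\right)^f$ that Iwaniec's formula naturally produces with the symbol $\left(\frac{3b}{p}\right)$ in the statement, and matching the exponential normalization ($e^{4\pi i y/l}$ versus $e^{2\pi i y/p^f}$, which is governed by the rescaling $y\mapsto 2y$ relating the two standard forms of the Salié evaluation). I would therefore fix a single convention for $\epsilon_{p^f}$ and for the quadratic symbol at the outset, so that the $f=1$ Gauss-sum computation and the $f\ge 1$ Iwaniec output are phrased consistently; once that normalization is pinned down, all three lines of the theorem follow mechanically.
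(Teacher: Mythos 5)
Your proposal is correct and takes essentially the same route as the paper, whose proof is just the one-line observation that all three cases are contained in the Iwaniec formula (Theorem~\ref{IwaniecResult}) ``with the ones that vanished, and the simple Gauss sum'' separated from the last case; your symmetry-plus-Corollary argument for $e\ge 1$, $f\ge 2$ and your direct Gauss-sum evaluation for $f=1$ are exactly the details the paper leaves implicit. Your closing caveat about reconciling $\left(\frac{3b}{p^f}\right)$ with $\left(\frac{3b}{p}\right)$ and $e^{4\pi i y/l}$ with $e^{2\pi i y/p^f}$ (via the rescaling $y\mapsto 2y$, which also replaces the congruence $y^2\equiv 3ab$ by $z^2\equiv 12ab$) correctly pinpoints the only genuine bookkeeping issue, and it resides in the theorem's stated normalization rather than in your argument.
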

\begin{proof}
All of these cases are contained in the Iwaniec formula. We separated the ones that vanished, and the simple Gauss sum, from the last case. 
\end{proof}

\section{$p = 3$}

 The combination of powers of $3$ dividing $m$ and $l$ can be grouped according to $e = 0, 1, 2$, where we have let,

\begin{eqnarray*}
m &=& 3^e \times l_1 \times a  \qquad (a, 3l) = 1 \\
l &=& 3^f \times l'_1\times  c  \qquad (c, 3m) = 1 \qquad  (l_1l'_1, 3) = 1.
\end{eqnarray*}

 The (twisted) multiplicativity result, 4, of the note, shows that a sum of the type,  $S = S(3^e a, 3 b, 3^f)$, is a factor of $S(3^e l_1 a, -3B^4, 3^f \times l'_1\times  c)$. The situation regarding this factor is given in the following result. 

\begin{theorem}\label{S3powers}
Let $S = S(3^e a , 3 b, 3^f)$. Then, 
\begin{align*}    
e &\ge 2,  f = 2,   S= -3 \\
e &\ge 2,  f\ge 3,  S=0     \\ 
e &\ge 1,  f = 1,  S = 0  \\ 
e &= 1,  f = 2,  S = 6,   \left(\frac{ab}{3}\right) = -1  \\
e &= 1,  f = 2,  S = -3,  \left(\frac{ab}{3}\right) = 1    \\
e &= 1,  f \ge 3,  S = 0 \qquad   
 \mbox{$ab$ is not a square mod $3^f$} \\
 e &
 \begin{aligned}[t]
   = 1,  f = 2n, n\ge 2,   S &= -\left(\frac{g}{3}\right) 2\times 3^n \sqrt{3} \  Sin\left(\frac{4 \pi g}{3^{2n-1}}\right), \\
   g^2&\equiv ab (3^{2n})
   \end{aligned} \\
 e & \begin{aligned}[t]
   = 1,   f = 2n-1, n\ge 2,  6S &=   2 i\left(\frac{g}{3}\right) \times 3^{n+1}  \  Sin\left(\frac{4 \pi g}{3^{2n-2}}\right), \\
    g^2&\equiv ab (3^{2n-1})
   \end{aligned}\\
e &= 0,  f\ge 2,  S = 0 \\
  e&=0,  f=1, S=\left(\frac{a}{3}\right)\sqrt{3}i
\end{align*}
\end{theorem}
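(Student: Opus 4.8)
The plan is to evaluate $S = S(3^e a, 3b, 3^f)$ with $(ab,3)=1$ by reducing, in each case, to a Kloosterman-type sum to a power of $3$ that can be computed by completing the square. Two observations organize everything. First, $\left(\frac{h}{3^f}\right) = \left(\frac h3\right)^f$, so the quadratic character is trivial when $f$ is even and equals the Legendre symbol $\left(\frac h3\right)$ when $f$ is odd; the parity of $f$ is thus what ultimately decides whether the answer is real (a $\sqrt3$ times a sine) or carries a factor $i$. Second, Iwaniec's formula, Theorem~\ref{IwaniecResult}, cannot be applied to $S$ as written, since its second argument $3b$ is not coprime to $l=3^f$; restoring a unit in one of the two arguments is the recurring theme of the proof.

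For $e=0$ I would apply the symmetry $S(a,3b,3^f)=S(3b,a,3^f)$ to move the unit $a$ into the position required by Theorem~\ref{IwaniecResult}, whose hypothesis $(3^f,2a)=1$ now holds. The resulting sum is governed by the solutions of $y^2\equiv 3ab\pmod{3^f}$. Since $(ab,3)=1$ the right-hand side has $3$-adic valuation exactly $1$, which is odd, so for $f\ge2$ there are no solutions and $S=0$, while for $f=1$ the single solution $y\equiv0$ yields $\left(\frac a3\right)\epsilon_3\sqrt3=\left(\frac a3\right)i\sqrt3$, as claimed. For $f=1$ with $e\ge1$, and for $e\ge2,\ f=2$, I would instead evaluate the short sum over $(\mathbb Z/3)^\times$ or $(\mathbb Z/9)^\times$ directly, obtaining $0$ and $-3$ respectively.

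The heart of the argument is the case $e\ge1$ with $f\ge2$. Here I would first peel off one power of $3$: because both arguments are divisible by $3$, the exponent $\frac{3^{e}ah+3b\bar h}{3^{f}}$ equals $\frac{3^{e-1}ah+b\bar h}{3^{f-1}}$ and depends on $h$ only modulo $3^{f-1}$, on which coset the character $\left(\frac h3\right)^f$ is also constant; summing the three lifts of each residue gives a clean factor of $3$ times a Kloosterman-type sum $\sum_h\left(\frac h3\right)^f e\!\big(\frac{Ah+b\bar h}{3^{f-1}}\big)$ with $A=3^{e-1}a$ and $b$ now a unit. I would then complete the square, whose critical points solve $h^2\equiv b\bar A$ (to the appropriate power of $3$). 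When $e\ge2$ the coefficient $A$ is divisible by $3$, so $b\bar A$ is not a unit, there is no critical point, and Hensel lifting shows the completed-square additive sum cancels, giving $S=0$ for $f\ge3$. When $e=1$ we have $A=a$ a unit and two critical points $\pm g$ with $g^2\equiv ab$, exactly the square-root condition in the theorem; the quadratic Gauss sum around each critical point contributes $\sqrt{3^{f-1}}$ and a phase $e(\pm4\pi g/3^{f-1})$, while the second derivative attaches a Legendre symbol equal to $\left(\frac g3\right)$ at $+g$ and $-\left(\frac g3\right)$ at $-g$. The two contributions therefore combine into $2i\left(\frac g3\right)\sin$ or $\left(\frac g3\right)\sqrt3\,\sin$ according to the parity of $f$, which is the source of the two displayed trigonometric formulas and of their respective factors $i$ and $\sqrt3$.

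The main obstacle is the exact constant bookkeeping in this last step. Three subtleties must be tracked simultaneously: the Gauss-sum normalization $\epsilon_{3^{f-1}}$, which toggles between $1$ and $i$ with the parity of $f$; the precise power of $3$ and of $\sqrt3$ coming from the size of the quadratic Gauss sum; and the dependence of the Legendre symbol on the chosen root $g$. These must conspire to produce exactly the constants $2\cdot3^{n}\sqrt3$ and $2i\cdot3^{n+1}$, including the factor $6$ on the left of the odd case. Because $p=3$ ramifies and the residue field is so small, the Hensel lift of $g$ is pinned down only modulo $9$ rather than modulo $3$, and completing the square requires a compensating substitution, so it is easy to be off by a root of unity or a factor of $\sqrt3$. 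I would therefore check every constant against the direct evaluations for $f=1,2$, which serve as a reliable anchor for the general formulas.
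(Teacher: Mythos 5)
Your strategy is sound, and it is substantially more self-contained than the paper's own proof, which disposes of Theorem~\ref{S3powers} in three sentences: the $e=0$ cases are read off from Theorem~\ref{IwaniecResult}, the $e\ge1$ cases are attributed to ``a formula that generalizes Theorem~\ref{IwaniecResult} in an obvious way,'' and the case-by-case constants are asserted without any displayed computation. Your $e=0$ treatment coincides with the paper's, and usefully makes explicit a step the paper glosses over: the hypothesis $(l,2b)=1$ of Theorem~\ref{IwaniecResult} fails for the literal arguments of $S(a,3b,3^f)$, so the symmetry $S(a,b,l)=S(b,a,l)$ (property 1 of the general results) must be applied first; after that, the odd $3$-adic valuation of $3ab$ kills all $f\ge2$ cases and leaves the Gauss sum $\left(\frac{a}{3}\right)i\sqrt{3}$ at $f=1$, exactly as you say. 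For $e\ge1$ you replace the paper's unspecified generalized formula with an explicit mechanism: peel one factor of $3$ (legitimate, since for $e\ge1$ both the phase $(3^{e-1}ah+b\bar h)/3^{f-1}$ and the character $\left(\frac{h}{3}\right)^f$ depend only on $h$ modulo $3^{f-1}$, yielding a clean factor of $3$), then evaluate the resulting untwisted or twisted Kloosterman sum modulo $3^{f-1}$ by completing the square. This is in substance how one proves the generalization the paper invokes, so you are supplying the missing proof rather than taking a different road; your split into ``no critical point, hence zero'' (all of $e\ge2$ with $f\ge3$, and $e=1$ with $ab$ a non-residue) versus ``two critical points $\pm g$'' is correct, as is your diagnosis that the parity of $f$, through $\left(\frac{h}{3^f}\right)=\left(\frac{h}{3}\right)^f$, decides between the real $\sqrt{3}\sin$ form and the form carrying $i$. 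One wording slip: for $e\ge2$ there is no inverse $\bar A$, so ``$b\bar A$ is not a unit'' is not quite the right statement; what you need is that the phase derivative $3^{e-1}a-b\bar h^2$ is a unit for every admissible $h$, so the inner additive sum over each fiber vanishes. The conclusion is unaffected.

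One concrete warning about your verification plan: anchoring constants only on $f=1,2$ will not exercise the odd-$f\ge3$ branch, which is the only branch where the nontrivial twist $\left(\frac{h}{3}\right)$ sits on an even prime-power modulus. If you push your own computation through at $f=3$, $a=b=1$ (so $g=1$), peeling gives
\begin{equation*}
  S(3,3,27)=3\sum_{h\bmod 9,\ (h,3)=1}\left(\frac{h}{3}\right)e^{2\pi i(h+\bar h)/9}
  =18\,i\sin\left(\frac{4\pi}{9}\right),
\end{equation*}
whereas the displayed odd-case formula with $n=2$ yields $S=9\,i\sin\left(\frac{4\pi}{9}\right)$; the even-case formula, by contrast, checks out exactly (for instance $S(3,3,81)=3K(1,1;27)=-18\sqrt{3}\sin\left(\frac{4\pi}{27}\right)$, matching the stated $n=2$ value). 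So your method, faithfully executed, reproduces the even case and flags an apparent factor-of-two misprint in the odd case (the ``$6S$'' should seemingly read $3S$) --- something the paper's terse proof, which presents only final constants, cannot adjudicate. This vindicates your insistence on numerical anchors, provided you include $f=3$ among them.
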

\begin{proof}
The two $e = 0$ cases are direct results of Theorem~\ref{IwaniecResult}. The results for $e \ge 1$ follow from a formula that generalizes Theorem~\ref{IwaniecResult} in an obvious way. We worked out each of the cases that can occur and presented them in their simplest form. 
\end{proof}

\section{$r = 2$}

With $r = 2$, then $\frac{r s + 2 - r}{2} = s$, and

\begin{eqnarray*}
\zeta^{*}(s, 2)   &=& \frac{1}{4} \int_0^{\infty} y^{s-1}(\theta^2 -1) dy  \\
&=&  \pi^{-s} \Gamma(s) \zeta(s, 2) \\
&=&  \pi^{-s} \Gamma(s) \zeta(s) L(s, \chi), 
\end{eqnarray*}

\begin{theorem}\label{BesselExpansionrIs2}
$$
4 \zeta^{*}(s, 2) =   -\left(\frac{\Psi_2(s)}{s} + \frac{\Psi_2(1-s)}{1-s}\right),
$$
$$
\Psi_2(s) = 2\pi \sum_{j=1}^{\infty} c_j(2, s)K_s(2\pi \sqrt{j}) 
$$
$$
c_j(2, s) = \sum_{d|j}a_2(d) b_2\left(\frac{j}{d}\right)\left(\frac{j}{d^2}\right)^{\frac{s}{2}}
$$
\end{theorem}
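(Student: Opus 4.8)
The plan is to obtain Theorem~\ref{BesselExpansionrIs2} as the $r=2$ specialization of the computation already carried out for Conjecture~\ref{ZetaDecomposition}, and then to supply the convergence argument that removes the provisional status in this case. The first displayed identity is immediate: the partition-of-unity manipulation performed earlier gives, for each admissible $r$, the relation $r^2\zeta^{*}(s,r)=-\left(\frac{\Psi_r(s)}{s}+\frac{\Psi_r(1-s)}{1-s}\right)$ with $\Psi_r(s)=\int_0^{\infty}y^{rs/2}\dot F_r\,dy$, and setting $r=2$ makes $r^2=4$, which is exactly the first line of the statement.

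First I would evaluate $\Psi_2(s)$ by following the chain of equalities in the proof of Conjecture~\ref{ZetaDecomposition} verbatim with $r=2$. Writing $\dot F_2=\bigl(4\frac{\dot\theta_4}{\theta_4}-2\frac{\dot\theta_3}{\theta_3}\bigr)F_2$, pulling the factor $\pi$ out of $4\frac{\dot\theta_4}{\theta_4}-2\frac{\dot\theta_3}{\theta_3}=\pi\sum_n a_2(n)e^{-\pi n y}$ as in the $r=1$ computation, and using the theta transformation to write $F_2(y)=y^{-1}\theta^2(1/y)\lambda(1/y)$ with $\theta^2(1/y)\lambda(1/y)=\sum_m b_2(m)e^{-\pi m/y}$, the integral factors term by term. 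For $r=2$ the exponent $\frac{rs+2-r}{2}$ collapses to $s$ and $\frac{rs+2-r}{4}$ to $s/2$, so each term contributes $\int_0^{\infty}y^{s-1}e^{-\pi(ny+m/y)}\,dy=2\bigl(\frac{m}{n}\bigr)^{s/2}K_s(2\pi\sqrt{mn})$ by the representation $2K_w(x)=\int_0^{\infty}u^{w-1}e^{-\frac{x}{2}(u+1/u)}\,du$. The prefactor $\pi$ times this factor $2$ yields $\Psi_2(s)=2\pi\sum_{m,n\ge 1}a_2(n)b_2(m)\bigl(\frac{m}{n}\bigr)^{s/2}K_s(2\pi\sqrt{mn})$. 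Collecting terms by $j=mn$ with $d=n$ and $m=j/d$ turns $(m/n)^{s/2}$ into $(j/d^2)^{s/2}$, and the Dirichlet convolution over $d\mid j$ is exactly $c_j(2,s)=\sum_{d\mid j}a_2(d)b_2(j/d)(j/d^2)^{s/2}$, which gives the remaining two displayed lines.

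The hard part is the interchange of summation and integration in the passage to the double sum, which is precisely the point on which Conjecture~\ref{ZetaDecomposition} stays provisional. For $r=2$ I expect this to be justifiable outright: the growth estimate $b_2(m)\le e^{\pi\sqrt{2m}(1+\epsilon)}$ together with the asymptotic $K_s(2\pi\sqrt m)\sim(4\sqrt m)^{-1/2}e^{-2\pi\sqrt m}$ forces the terms to decay like $e^{(\sqrt2-2)\pi\sqrt m}$, so that $\sum_m b_2(m)K_s(2\pi\sqrt m)$ converges absolutely; since $a_2(n)$ grows only polynomially, the full double sum is dominated by an absolutely convergent series and Fubini applies. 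The genuine obstacle is therefore the control of $b_2(m)$, the coefficients of $\theta^2\lambda=\theta_2^4/\theta_3^2$: the extra factor $\theta^2$ is what lowers the coefficient growth from the $e^{2\pi\sqrt m}$ rate of Simons' $b_0(m)$, which destroys absolute convergence in the $r=0$ case, down to the $e^{\pi\sqrt{2m}}$ rate that the Bessel decay can absorb. Establishing this bound rigorously, in the elementary style of the partition-function estimates, is the step I expect to require the most care, and it is what separates this theorem from the still-provisional general statement.
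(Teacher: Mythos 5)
Your proposal matches the paper's own (implicit) argument exactly: Theorem~\ref{BesselExpansionrIs2} is the $r=2$ specialization of the computation given for Conjecture~\ref{ZetaDecomposition} (with $\frac{rs+2-r}{2}=s$ and $\frac{rs+2-r}{4}=\frac{s}{2}$), the first display being the earlier partition-of-unity identity $r^2\zeta^{*}(s,r)=-\bigl(\frac{\Psi_r(s)}{s}+\frac{\Psi_r(1-s)}{1-s}\bigr)$ at $r=2$, and the interchange of sum and integral justified, as in the paper's Upper Bounds section, by the bound $b_2(m)\le e^{\pi\sqrt{2m}(1+\epsilon)}$ against the Bessel decay $e^{-2\pi\sqrt{m}}$. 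Your closing caveat is also accurate: the paper states that coefficient bound only as a conjecture with numerical evidence, so the rigorous elementary estimate for $b_2(m)$ that you flag as the remaining work is precisely the unproven ingredient in the paper as well.
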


\begin{conjecture}\label{b2mEvaluation}
\begin{equation*}
b_2(m) =  \frac{\pi}{2}\sum_{k\equiv 2 (mod 4)}  \frac{A_k(m)}{k} \ I_0\left(\frac{2\pi\sqrt{2m}}{k}\right) + 2\delta_2(m) 
\end{equation*}
where
$$
\frac{\theta^2 -1}{4} = \sum_{n=1}^{\infty} \delta_2(m) q^m,
$$
 and
 $$
A_k(m) = -  \sum_{(h, k) = 1} \ \kappa^2(\gamma^{-1})e^{-2\pi i (\frac{h'}{2k} +  \frac{mh}{k})}  
$$
 is the arithmetic factor with the same multiplier, $\kappa(\gamma^{-1})$, as before, but squared.
\end{conjecture}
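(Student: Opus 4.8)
The plan is to read off the coefficients $b_2(m)$ by the Hardy--Ramanujan--Rademacher circle method, exactly as in~\cite{Simons1952} and as was done to produce Conjecture~\ref{LambdaCoefficients} in the $r=1$ case. The relevant generating function here is the weight-one form
$$
g(\tau) = \theta_3^2(\tau)\,\lambda(\tau) = \theta_3^{-2}(\tau)\,\theta_2^4(\tau),
$$
which is holomorphic on the upper half plane (since $\theta_3$ has no zeros there) and meromorphic at the cusps of a congruence subgroup of level $2$. First I would record the transformation law of $g$ under $\tau\mapsto\gamma\tau=(a\tau+b)/(c\tau+d)$: because $\theta_2,\theta_3,\theta_4$ transform with the theta multiplier $\kappa(\gamma)$ and automorphy factor $(c\tau+d)^{1/2}$, the combination $\theta_3^{-2}\theta_2^4$ picks up $\kappa^2$ together with the factor $(c\tau+d)^{1}$, which is precisely the squared multiplier announced in the statement.

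Next I would write $b_2(m)$ as a Cauchy integral of $g$ over a circle just inside $|q|=1$ and dissect the circle into Farey arcs centred at rationals $h/k$. On the arc at $h/k$ I substitute $\tau=(h+iz)/k$ and apply the transformation law to carry the cusp $h/k$ to $i\infty$, where the behaviour of $g$ is governed by the most singular term of its $q$-expansion at the image cusp. A short computation with the theta inversion formulas shows that this leading singular part scales like $\exp\bigl(2\pi\cdot 2/(k^2 z)\bigr)$ up to the automorphy factor, so that the cusps producing exponentially growing contributions are exactly those with $k\equiv 2\pmod 4$, while all other denominators should contribute only lower-order terms to be absorbed into the error. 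Integrating each arc contribution against $z$ produces, in the standard way, a Bessel factor $I_0\bigl(2\pi\sqrt{2m}/k\bigr)$, the order $0$ reflecting the weight $1$ of $g$ and the $\sqrt{2m}$ reflecting the exponential rate $4-r=2$ that also appears in the upper bounds of the previous section. Summing the phases $e^{-2\pi i(h'/2k+mh/k)}\kappa^2(\gamma^{-1})$ over $h$ coprime to $k$ assembles exactly the Kloosterman-type factor $A_k(m)$, and the regular, polynomially bounded part of $g$ — the analogue of the $\delta_1$ term in Conjecture~\ref{LambdaCoefficients} — supplies the additive correction $2\delta_2(m)$.

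The main obstacle is not the formal derivation but the error control: one must show that the non-dominant Farey arcs and the tail of the sum over $k\equiv 2\pmod 4$ are negligible, that is, that the series on the right genuinely converges to $b_2(m)$. This is the same gap that keeps Conjecture~\ref{LambdaCoefficients} a conjecture, and it is sharper here, since $g$ has weight $1$ and the decay of $I_0$ is weaker than that of the $I_{1/2}$ appearing for $r=1$. The factors $A_k(m)$ are moreover no longer elementary, so extracting enough cancellation in them — presumably through a Salié-sum evaluation analogous to Theorem~\ref{AkAsSalie01}, now carried out with $\kappa^2$ in place of $\kappa$ — is where the real work, and the reason for the provisional status, lies.
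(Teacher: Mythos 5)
Your derivation follows essentially the same route the paper takes: Conjecture~\ref{b2mEvaluation} is obtained by Simons' circle-method calculation applied to the weight-one form $\theta_3^2\lambda=\theta_3^{-2}\theta_2^4$, with the squared theta multiplier $\kappa^2$, dominant cusps at $k\equiv 2\pmod{4}$, Bessel index $0$ coming from the weight, the regular part supplying $2\delta_2(m)$, and the same unresolved convergence and error-control gap that keeps the statement provisional rather than proved. The one small divergence is at the very end: the paper's Theorem~\ref{ris2Zeta} evaluates $A_k(m)$ as an ordinary Kloosterman sum $T$ rather than a Sali\'e sum, since squaring the multiplier removes the quadratic character that produced the Sali\'e structure in the $r=1$ case.
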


\begin{theorem}\label{ris2Zeta}
\begin{equation}
A_k(m) = (-1)^{(l-1)/2}(-1)^{m-1} T(-2m A^2, A^2, l), \tag{$k=2l$, $A = \frac{l+1}{2}$}
\end{equation}
where $T$ is the Kloosterman sum:
$$
T(a, b, l) = \sum_{(h,k)=1}e^{\frac{2\pi i}{l}(a h + b \bar h)}.
$$
\end{theorem}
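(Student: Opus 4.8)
The plan is to mimic the proof of Theorem~\ref{AkAsSalie01} (the $r=1$ case), replacing the half-integral-weight multiplier $\kappa(\gamma^{-1})$ by its square. The essential simplification is that $\theta^2$ is a weight-one form: squaring the $\theta_3$-multiplier destroys the Gauss-sum ambiguity and collapses the Jacobi symbol $\left(\frac{c}{d}\right)^2$ to $1$, so that $\kappa^2(\gamma^{-1})$ reduces to the sign $\epsilon_d^{-2} = \left(\frac{-1}{d}\right)$ together with an explicit exponential phase. This is exactly why the $r=1$ computation produced a Salié sum (carrying the Jacobi symbol $\left(\frac{h}{l}\right)$) while the present $r=2$ computation will produce the plain Kloosterman sum $T$, with no quadratic character attached.

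First I would write down Rademacher's explicit transformation formula for $\kappa(\gamma^{-1})$ specialized to the matrix attached to the Farey arc at $h/k$ with $k = 2l$, $l$ odd, and square it. Inserting the resulting expression for $\kappa^2(\gamma^{-1})$ into the defining sum of Conjecture~\ref{b2mEvaluation},
\begin{equation*}
A_k(m) = - \sum_{(h,k)=1} \kappa^2(\gamma^{-1}) e^{-2\pi i \left( \frac{h'}{2k} + \frac{mh}{k} \right)},
\end{equation*}
I would combine the phase coming from the multiplier with the given exponential, with the aim of assembling everything into the single Kloosterman shape $e^{\frac{2\pi i}{l}(\alpha h + \beta \bar h)}$ after a sign is pulled out front.

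Next I would reduce the modulus from $2l$ to $l$. Since $(h,2l)=1$ forces $h$ odd and coprime to $l$, I would apply the Chinese Remainder Theorem to split the $h$-sum modulo $2l$ into its residue modulo $l$ (coprime to $l$) and its forced odd residue modulo $2$; the $2$-part contributes the sign $(-1)^{m-1}$ and fixes the normalization, while the $l$-part becomes $T(\cdot,\cdot,l)$. The associate $h'$ (inverse modulo $2l$) must be converted to $\bar h$ (inverse modulo $l$), and this is where $A = \frac{l+1}{2} \equiv 2^{-1} \pmod l$ enters: the factor $\tfrac{1}{2}$ sitting in front of $h'$ in the exponent, together with the $2$ in the modulus, produces exactly the coefficients $-2mA^2$ and $A^2$ claimed for the two arguments of $T$.

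The main obstacle will be the bookkeeping of the factor of $2$: tracking it simultaneously through the even lower-left entry $c = 2l$ of $\gamma^{-1}$, through the change of modulus $2l \to l$, and through the passage from $h'$ to $\bar h$. In particular one must check that the accumulated root-of-unity and $\epsilon_d$ factors collapse to the single sign $(-1)^{(l-1)/2} = \left(\frac{-1}{l}\right)$, which I expect to follow from the usual reciprocity relation between $\left(\frac{-1}{d}\right)$ for the relevant $d$ and $\left(\frac{-1}{l}\right)$. Once the factor-of-$2$ normalization and this final sign are pinned down, the identity follows exactly as in Theorem~\ref{AkAsSalie01}.
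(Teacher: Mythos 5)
Your proposal takes essentially the same approach as the paper: the paper proves the analogous $r=1$ statement (Theorem~\ref{AkAsSalie01}) by exactly this route—Rademacher's explicit transformation formula for the $\theta_3$-multiplier followed by a ``straight-forward calculation''—and for Theorem~\ref{ris2Zeta} it gives no separate proof, evidently intending the identical computation with $\kappa^2(\gamma^{-1})$ in place of $\kappa(\gamma^{-1})$. Your structural observations—that squaring the multiplier collapses the Jacobi symbol so a plain Kloosterman sum appears instead of a Sali\'e sum, and that $A=\frac{l+1}{2}\equiv 2^{-1}\pmod{l}$ mediates the reduction of the modulus from $2l$ to $l$—are precisely the points that make that calculation work, so your sketch is a correct (and more detailed) account of the paper's argument.
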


Combining conjecture~\ref{b2mEvaluation} and theorem~\ref{ris2Zeta} above provides a final evaluation for the $b_2(m)$:
$$
b_2(m) = \frac{\pi}{4}(-1)^{m-1}\sum_{j=0}^{\infty}\frac{T(-2m(j+1)^2, (j+1)^2, 2j+1)}{2j+1} + 2 \delta_2(m).
$$

\section{$r = 3$}

The function,
$$
\zeta^{*}(s, 3) = \frac{1}{6} \int_0^{\infty} y^{s-1}(\theta^3 -1) dy  
$$
has the Bessel K series expansion,

\begin{theorem}
$$
6 \zeta^{*}(s, 3) =   -\Big(\frac{\Psi_3(s)}{s} + \frac{\Psi_3(1-s)}{1-s}\Big),
$$
$$
\Psi_3(s) = 2\pi \sum_{j=1}^{\infty} c_j(3, s)K_{\frac{3s-1}{2}}(2\pi \sqrt{j}) 
$$
$$
c_j(3, s) = \sum_{d|j}a_3(d) b_3(\frac{j}{d})\Big(\frac{j}{d^2}\Big)^{\frac{3s-1}{4}}
$$
\end{theorem}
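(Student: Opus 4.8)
The plan is to treat this as the $r=3$ specialization of the general apparatus assembled in the Partition of Unity section and in the proof of Conjecture~\ref{ZetaDecomposition}, so that essentially nothing new has to be invented; the content lies in checking that every ingredient behaves at $r=3$, in tracking the normalizing constants correctly, and above all in justifying the interchange of summation and integration.

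First I would record the meromorphic decomposition. Starting from the integral representation of $\zeta^*(s,3)$, I insert the partition of unity $1 = \lambda(1/y) + \lambda(y)$, split the integral, apply the inversion $y\mapsto 1/y$ to the $\lambda$-piece, and integrate by parts once. This is exactly the computation of the Partition of Unity section specialized to $r=3$, and, after carrying the constants through, it produces the identity $6\zeta^*(s,3) = -(\Psi_3(s)/s + \Psi_3(1-s)/(1-s))$ with $\Psi_3(s) = \int_0^\infty y^{3s/2}\dot F_3\,dy$ and $F_3 = \theta^3(1-\lambda) = \theta_3^{-1}\theta_4^4$. Since $4-r = 1$ here, the logarithmic derivative driving $\dot F_3$ is $4\dot\theta_4/\theta_4 - \dot\theta_3/\theta_3$, whose $q$-coefficients are the $a_3(n)$; the only delicate point in this step is bookkeeping of the leading constant, which is what distinguishes the three cases $r=1,2,3$.

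Next I would compute $\Psi_3(s)$ as a Bessel series, following the chain of equalities in the proof of Conjecture~\ref{ZetaDecomposition} verbatim with $r=3$. Writing $\dot F_3 = (4\dot\theta_4/\theta_4 - \dot\theta_3/\theta_3)F_3$ and using the theta transformations $\theta_3(1/y) = \sqrt y\,\theta_3(y)$, $\theta_4(1/y) = \sqrt y\,\theta_2(y)$ to rewrite $F_3(y) = y^{-3/2}\theta^3(1/y)\lambda(1/y)$, the integral becomes the double sum $\sum_{m,n}a_3(n)b_3(m)\int_0^\infty y^{(3s-1)/2 - 1}e^{-\pi(ny + m/y)}\,dy$, in which the $b_3(m)$ are the coefficients of $\theta_3^3\lambda = \theta_3^{-1}\theta_2^4$. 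The inner integral is the standard $K$-Bessel integral $2K_w(x) = \int_0^\infty u^{w-1}e^{-\frac x2(u+1/u)}\,du$ with $w = (3s-1)/2$ and $x = 2\pi\sqrt{mn}$, giving $2(m/n)^{(3s-1)/4}K_{(3s-1)/2}(2\pi\sqrt{mn})$. Collecting terms by $j = mn$ via Dirichlet convolution then yields precisely $c_j(3,s) = \sum_{d|j}a_3(d)b_3(j/d)(j/d^2)^{(3s-1)/4}$ and the stated form of $\Psi_3(s)$.

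The one genuine obstacle is the same as everywhere else in the paper: justifying the interchange of the double sum with the integral, which amounts to the absolute convergence of $\sum_m b_3(m)K_{(3s-1)/2}(2\pi\sqrt m)$. Here I would invoke the Upper Bounds discussion, noting that $r=3$ is the most favourable case: the three extra factors of $\theta_3$ inside $\theta_3^3\lambda = \theta_3^{-1}\theta_2^4$ (as against $\lambda = \theta_3^{-4}\theta_2^4$) cut the exponential growth of the coefficients down to $b_3(m)\le e^{\pi\sqrt m(1+\epsilon)}$, whereas $K_w(2\pi\sqrt m)\sim (4\sqrt m)^{-1/2}e^{-2\pi\sqrt m}$. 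The decay exponent $2\pi\sqrt m$ dominates the growth exponent $\pi\sqrt m$ with the widest margin of the three cases, so the series converges absolutely and the interchange is legitimate. I expect the hard part to be making the bound on $b_3(m)$ rigorous rather than merely numerical; a Rademacher circle-method analysis of $\theta_3^{-1}\theta_2^4$, analogous to the route to Conjecture~\ref{LambdaCoefficients} for $r=1$, should supply the dominant term $e^{\pi\sqrt m}$ and thereby close the gap, but that analysis, not the formal manipulation above, is where the real content of the theorem sits.
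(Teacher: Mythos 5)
Your route is the paper's route: the paper gives no separate proof of this theorem, and its implicit argument is exactly what you describe --- the Partition of Unity computation and the proof of Conjecture~\ref{ZetaDecomposition} specialized at $r=3$, with the interchange of sum and integral referred to the Upper Bounds section. Your formal chain is correct in every particular ($F_3=\theta_3^{-1}\theta_4^4$, logarithmic derivative $4\dot\theta_4/\theta_4-\dot\theta_3/\theta_3$, $\theta_3^3\lambda=\theta_3^{-1}\theta_2^4$, Bessel index $w=\tfrac{3s-1}{2}$, Dirichlet convolution at $j=mn$), and you are right that the only genuine mathematical content is the bound $b_3(m)\le e^{\pi\sqrt{m}(1+\epsilon)}$, which in the paper is itself only a numerically supported conjecture --- which is why the general statement is labeled a conjecture, and why this ``theorem'' is exactly as provisional as your write-up.

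Two concrete points. First, the constant: the step you singled out as the delicate one --- ``bookkeeping of the leading constant'' --- does not come out as you assert. With the paper's definitions, $\zeta^{*}(s,r)=\frac{1}{2r}\int_0^\infty y^{rs/2-1}\bigl(\theta^r-1-y^{-r/2}\bigr)\,dy$ and $\Psi_r(s)=\int_0^\infty y^{rs/2}\dot F_r\,dy$, one integration by parts gives $2r\,\zeta^{*}(s,r)=-\frac{2}{r}\bigl(\frac{\Psi_r(s)}{s}+\frac{\Psi_r(1-s)}{1-s}\bigr)$, i.e. $r^2\zeta^{*}(s,r)=-\bigl(\frac{\Psi_r(s)}{s}+\frac{\Psi_r(1-s)}{1-s}\bigr)$, as the paper itself records; at $r=3$ this yields $9\zeta^{*}(s,3)$, not $6\zeta^{*}(s,3)$. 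The cross-check against Theorem~\ref{BesselExpansionrIs2} is inconclusive because $2r=r^2$ at $r=2$; the stated $6$ looks like $2r$ where $r^2$ is meant. So carrying the constants through carefully exposes a normalization slip in the statement rather than confirming it, and asserting that the computation ``produces'' the $6$ is the one step of your proposal that would fail if actually executed. Second, on closing the convergence gap by a Rademacher analysis of $\theta_3^{-1}\theta_2^4$: note that the paper explicitly states it does not believe its Rademacher-type series for $b_3(m)$ (equation~\ref{b3mEquation}) converges, so the route you sketch must be a truncated circle-method estimate with an explicit error term, in the style of Hardy--Ramanujan rather than a convergent Rademacher expansion, to obtain $b_3(m)\ll e^{\pi\sqrt{m}(1+\epsilon)}$; with that emendation, your identification of where the real difficulty sits is exactly right.
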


Following the same steps in the calculations for $b_1$ and $b_2$ we obtained the expression,
\begin{equation}\label{b3mEquation}
b_3(m) =  \sqrt{2}\pi \ m^{\frac{1}{4}} \ \sum_{k\equiv 2 (mod 4)}  \frac{A_k(m)}{k} \ I_{-\frac{1}{2}}\Big(\frac{2\pi\sqrt{m}}{k}\Big) + 3\delta_3(m).
\end{equation}
where 
\begin{equation*}
\frac{\theta^3 -1}{6} = \sum_{n=1}^{\infty} \delta_3(m) q^m,
\end{equation*}
 and
$$
A_k(m) = - i \sum_{(h, k) = 1} \ \kappa^3(\gamma^{-1})e^{-2\pi i (\frac{h'}{4k} +  \frac{mh}{k})} 
$$
We do not believe that the sum in equation~\ref{b3mEquation} converges.

\begin{theorem}\label{ris3AkmCalculation}
  \begin{equation}
 A_k(m) = (-1)^{m-1} i^{\frac{l-1}{2}} S(-4m A^3, A^3, l) \tag{$A =  \frac{l+1}{2}, k = 2l$}
  \end{equation}
  where
  $$
S(a, b, l) = \sum_{(h, l) =1} \left(\frac{h}{l}\right)e^{\frac{2\pi i}{l}(a h + b \bar h)}.
$$
\end{theorem}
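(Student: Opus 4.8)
The plan is to follow verbatim the route already taken for Theorem~\ref{AkAsSalie01} and Theorem~\ref{ris2Zeta}: insert Rademacher's explicit formula~\cite{Rademacher-AnalyticNumber} for the $\theta_3$-multiplier $\kappa(\gamma^{-1})$ into the defining sum
$$A_k(m) = -i\sum_{(h,k)=1}\kappa^3(\gamma^{-1})\,e^{-2\pi i\left(\frac{h'}{4k}+\frac{mh}{k}\right)},$$
and reduce the resulting exponential sum to the Salie sum $S(-4mA^3, A^3, l)$. The only structural novelty compared with the two earlier cases is the third power of the multiplier, so I would first record the effect of cubing. Writing $\kappa$ as a Jacobi symbol times a normalized fourth root of unity $\epsilon^{\pm 1}$ (with $\epsilon\in\{1,i\}$), cubing leaves the Jacobi symbol unchanged (its values are $\pm 1$ on the relevant residues, so its square is $1$) and sends $\epsilon\mapsto\epsilon^3=\epsilon^{-1}$. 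Thus $\kappa^3$ carries the same quadratic character as $\kappa$; this is exactly why the $r=3$ arithmetic factor lands on a Salie sum $S$ (with the Legendre symbol) rather than on a Kloosterman sum $T$, as happened in the even case $r=2$ of Theorem~\ref{ris2Zeta}.

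Next I would split the sum over $(h,k)=1$ using $k=2l$ with $l$ odd. By the Chinese Remainder Theorem the condition $(h,2l)=1$ factors into $h$ odd and $(h,l)=1$, and the exponent $\frac{h'}{4k}+\frac{mh}{k}=\frac{h'+4mh}{8l}$ separates into a residue mod $8$ and a residue mod $l$. The mod-$l$ part is where the Salie structure lives: reducing the denominator $8l$ modulo $l$ requires $\overline{8}\pmod l$, and since $2A=l+1\equiv 1\pmod l$ we have $A\equiv\overline{2}$ and hence $\overline{8}=A^3\pmod l$. This is precisely the mechanism that produces $A^3$ in both arguments of $S$: the factor $\overline{8}=A^3$ multiplies the $4m$ coming from the $h$-term, and, via the companion relation $h'\equiv-\bar h\pmod l$, it multiplies $\bar h$ in the $h'$-term, landing the sum in the normalized form $S(-4mA^3, A^3, l)$ after applying properties~1--4 of the note.

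Finally I would evaluate the mod-$8$ (dyadic) part of the split sum together with the leftover fourth-root-of-unity factors from $\kappa^3$ and the global $-i$. These combine into the scalar prefactor, and I expect them to collapse to exactly $(-1)^{m-1}i^{\frac{l-1}{2}}$, matching the pattern of the $\epsilon_l$-type factors seen in Theorem~\ref{AkAsSalie01} and Theorem~\ref{ris2Zeta}. The main obstacle is this last bookkeeping step: correctly tracking every fourth root of unity through the cubing of $\kappa$, pinning down Rademacher's sign convention for $h'$ and for the multiplier, and carrying out the dyadic Gauss-sum evaluation so that the prefactor comes out as exactly $i^{\frac{l-1}{2}}$ and not off by a sign or a power of $i$. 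Once those conventions are fixed, the reduction is the same straightforward---if lengthy---calculation that establishes the $r=1$ and $r=2$ formulas.
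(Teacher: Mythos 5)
Your proposal takes exactly the paper's route: the paper gives no separate proof of Theorem~\ref{ris3AkmCalculation}, relying on the same ``straightforward calculation'' with Rademacher's explicit transformation formula for the $\theta_3$-multiplier invoked in the proof of Theorem~\ref{AkAsSalie01}, which is precisely the plan you describe. The mechanisms you make explicit --- $A\equiv\bar{2}\pmod{l}$ so $\bar{8}\equiv A^3\pmod{l}$ producing the $A^3$ in both arguments, and $\kappa^3$ retaining the quadratic character while inverting the fourth root of unity (explaining why $r=3$ lands on a Salie sum $S$ rather than the Kloosterman sum $T$ of the $r=2$ case) --- are consistent with, and indeed more detailed than, the paper's own treatment.
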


\appendix

\section{}

Assume that for arithmetic sequences $[a(n)]_{n=1}^\infty$ and $[b(n)]_{n=1}^\infty$, the function
\begin{equation*}
\Psi(s) = \sum_{j=1}^{\infty} c_j(s)K_s(2\pi \sqrt{j}),
\end{equation*}
is entire, where, as defined earlier,
\begin{equation*}
c_j(s) = \sum_{d|j} a(j)b(\frac{d}{j})\left(\frac{j}{d^2} \right)^{\frac{s}{2}} \qquad j\ge 1
\end{equation*}
For the special sequences that appear in Theorem~\ref{ris4}, numerical
tests for the zeros of $\Psi$ gave what was expected: zeros on four
vertical lines, $\sigma=-\frac{1}{2}, \frac{1}{2},0,1$. For
$a(n)=b(n)=1$, the zeros of $\Psi$ that we found were pure imaginary to
within machine precision. Other pairs of multiplicative sequences gave
zeros that were sporadically distributed. A rationale for which
sequences produced highly organized zeros for the associated $\Psi$ is
an attactive puzzle. We considered briefly an operator that would
encompass the second order equations satisfied by the $K_s(x_j)$, but dropped it. We take up a curious property of Hecke operators in Appendix~\ref{partitionAppendix}.

\section{Other partitions of unity}\label{partitionAppendix}

We looked at other partitions of unity that may be used to provide Bessel
expansions for the Riemann Zeta function. For a modular function of
weight zero to be a partition of unity it must satisfy the functional
equation
\begin{equation*}
  f(\tau)+f\left(-\dfrac{1}{\tau}\right)=1.
\end{equation*}
If $f$ is such a partition of unity then this leads to a partition of  the Riemann
$\xi$ of the form
\begin{equation}\label{psifunctional}
  2\xi(s) = (1-s)\Psi_f(s) + s\Psi_f(1-s)
\end{equation}
where
\begin{equation}\label{psidefinition}
  \Psi_f(s) = \int_0^\infty x^{s/2}
  \left(\dfrac{d}{dx}\theta_3(ix)f(ix)\right)dx.
\end{equation}
Certain partitions of unity accumulated zeros of the associated $\Psi(s)$ close to the $\frac{1}{2}$ line.  As a first example, consider the zeros of 
$\Psi_{1-\lambda}$:
\begin{equation*}
\begin{split}
-6.85993 + 18.5302 i \\
8.13082 + 18.4858 i \\
 -7.17924 + 28.5267 i \\
8.33654 + 28.5203 i \\
-8.228 + 36.2544 i \\
9.40533 + 36.2529 i \\
 -8.20765 + 42.7431 i \\
9.33192 + 42.7147 i \\
 -8.69688 + 49.6138 i \\
 9.82173 + 49.6313 i \\
-9.19901 + 55.0839 i \\
 10.3377 + 55.0643 i.
\end{split}
\end{equation*}

An infinite class of partitions of unity arise from weight one-half Hecke operators acting on weight one-half modular forms.
For an odd
prime, $p$, the Hecke operator acting on a weight one-half modular form $g$, is expressed as
\begin{equation*}
   {\mathcal H}_p(g)(\tau) = \begin{aligned}[t] g(p^2\tau) 
             & + \dfrac{1}{\sqrt{p}\epsilon_p}\sum_{m=1}^{p-1}\left(\dfrac{m}{p}\right)g\left(\tau+\dfrac{2m}{p}\right) \\
     & + \dfrac{1}{p} \sum_{m=0}^{p^2-1}g\left(\dfrac{\tau+2m}{p^2}\right),
     \end{aligned}
\end{equation*}
where
\begin{equation*}
             \epsilon_p = \left\{\begin{array}{cl} 1 & p\equiv 1(4) \\
                                         i & p\equiv 3(4)
                      \end{array}\right.
\end{equation*}
and 
\begin{equation*}
  \left(\dfrac{m}{p}\right)
\end{equation*}
is a Jacobi symbol. If $\phi$ is a partition of unity then
\begin{equation*}
  \dfrac{1}{p+1}\dfrac{1}{\theta_3}\mathcal H_p(\theta_3\phi)
\end{equation*}
is also a partition of unity.

For example, if
\begin{equation*}
  f_1(\tau) = \theta_3(\tau)(1-\lambda(\tau)),
\end{equation*}
then
%\begin{comment}
\begin{align*}
  \dfrac{1}{4}\mathcal H_3&(f_1)(\tau) = \\
  &= \dfrac{1}{4}
  \begin{aligned}[t]
    \Big(f_1(9\tau)
  &-\dfrac{i}{\sqrt{3}}f_1\left(\tau+\dfrac{2}{3}\right)
  +\dfrac{i}{\sqrt{3}}f_1\left(\tau+\dfrac{4}{3}\right) \\
  &+\dfrac{1}{3}\sum f_1\left(\dfrac{\tau+2k}{9}\right)\Big)
  \end{aligned}\\
  & = \theta_3(\tau)(1 - \lambda(\tau))
  \begin{aligned}[t](1 &- 12000 \lambda(\tau) + 441792 \lambda(\tau)^2 \\
    & - 3350528 \lambda(\tau)^3 
    + 9224192 \lambda(\tau)^4 \\ &- 10485760 \lambda(\tau)^5 + 
    4194304 \lambda(\tau)^6).
    \end{aligned}
\end{align*}
%\end{comment}
is a partition of unity attached to the prime, 3.

\begin{figure}
  \includegraphics[width=4.5in]{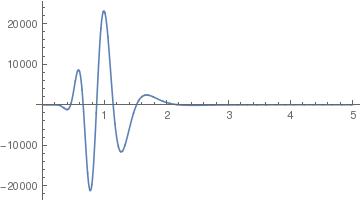}
  \caption{Plot of the integrand of the $\Psi$ function obtained from 
    $\mathcal H_3$ of $\theta_3(1-\lambda)$}\label{PsiIntegrand03}
\end{figure}
\begin{figure}
  \includegraphics[width=4.5in]{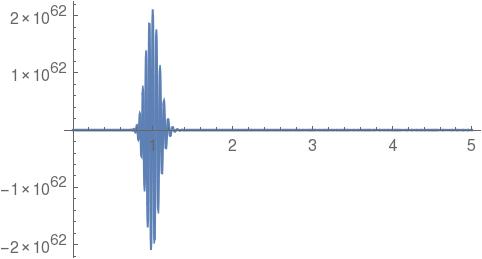}
  \caption{Plot of the integrand of the $\Psi$ function obtained from 
    $\mathcal H_{11}$ of $\theta_3(1-\lambda)$}\label{PsiIntegrand11}
\end{figure}

Regarding numeric evaluations of the $\Psi$ functions for these partitions of unity, we used
Mathematica's numeric integration technology to perform the
integrations for equation~\ref{psidefinition}.  We were a bit worried
about the accuracy of such calculations because as the prime, $p$,
gets larger, the behavior of the integrand becomes extremely
oscillatory.
For example, for $p=3$ (see
figure~\ref{PsiIntegrand03}), the integrand
goes between -20,000 to +20,000 but the curve is smooth. But as
the prime $p$ gets larger the oscillations of the integrand becomes more
extreme.  For example, 
figure~\ref{PsiIntegrand11}
shows a plot of the integrand when $p=11$.

Concerned that mathematica could not accurately calculate the value of $\Psi$ when the integrand was behaving in such an extreme manner, we tested our
numerical values for $\Psi$ in the equation~\ref{psifunctional} on page~\pageref{psifunctional} and
the results were very encouraging. For example, when $s=.5+i$ numeric calculations showed
\begin{align*}
  \xi(s)      & = -0.485757 \\
  \Psi_{11}(s) & = 4.738132-2.85482i\\
  2\xi(s) -( (1-s)\Psi_{11}(s) + s\Psi_{11}(1-s)) & = 0 
\end{align*}
These results continued to be encouraging for other larger values of
$s$. For example at $s=50+50i$ we obtained
\begin{align*}
  \xi(s)      & = -8.08211*10^{9}+1.715833*10^9 i\\
  \Psi_{11}(s) & = -5.135*10^{38}-3.14968*10^{38} i \\
  2\xi(s) -&( (1-s)\Psi_{11}(s) + s\Psi_{11}(1-s)) \\
  & = -5.63493*10^{-19}+3.19468*10^{-19}i
\end{align*}
This seemed like pretty convincing evidence because it did not
seem likely that Mathematica was using integration by parts in such a
manner as to obtain erroneous results that were consistent with~\ref{psifunctional}.

It may be worth noting that in order to avoid errors in Mathematica we
had to use some rather extreme values on the numeric integration
precision.  For example, we set Mathematica up to perform integration
with a working precision of 200 places of accuracy and a precision goal
of 30 places of accuracy. We noticed that with these settings
Mathematica seemed to return results that were significantly more
accurate than requested.

Regarding the zeros clustering phenomenon around the
one-half line, the behavior was not particularly striking for the $\Psi$-function associated with $\mathcal H_3$ versus the previous example, but it was apparent. Thus, for zeros in the square
between -50 and $50+40i$:
\begin{equation*}
  \begin{array}{cc}
     0.235237 + 0.77205 i \\
     0.516168 + 3.6727 i \\
     0.498618 + 6.13523 i \\
-21.1074 + 19.7716 i & 22.1074 + 19.7716i  \\
-23.4598 + 29.768 i  & 24.4599 + 29.768 i  \\
-25.2761 + 37.4269 i &  26.2762 + 37.4269i  
\end{array}
\end{equation*}
The  $\Psi$-function associated with $\mathcal H_5$ of $1-\lambda$ gave the following zeros in the square from -50 to
$50+50i$:
\begin{align*}
0.233242 + 0.437961 i\\
 0.519924 + 2.43977 i\\
 0.49621 +  4.48092 i\\
 0.500645 + 6.21101 i\\
 0.499978 + 9.29704 i\\
 0.500002 + 11.2699 i\\
 0.5 + 13.5882 i\\
 0.5 + 17.5795 i\\
 -34.6505 + 21.523 i \\ 35.6505 +  21.523 i\\
 0.5 + 22.4764 i \\
 -38.8576 + 31.7487 i \\  39.8576 +  31.7487 i\\
 -41.6848 +  39.6315 i \\ 42.6848 + 39.6315 i\\
 -43.8901 + 46.6857 i \\  44.8901 + 46.6857 i
\end{align*}
The occurances of a real part of $.5$ does not mean that the zero is 
exactly on the one-half line; it is just means that such a zero is
within machine precision of the one-half line.
The zeros of $\Psi$-function associated with $\mathcal H_7$ of
$1-\lambda$ show a greater clustering behavior.  The results for zeros in the box between -50 and $50 +50i$:
\begin{equation*}
\begin{split}
  0.234502 + 0.298452 i\\
  0.519389 + 2.01942 i\\
  0.495116 +  3.64979 i\\
  0.501157 + 5.24145 i \\
  0.49982 + 6.98016 i\\
  0.500017 +  9.04737 i\\
  0.499999 + 11.023 i\\
  0.5 + 12.9791 i\\
  0.5 + 15.0838 i\\
-1.01888 + 18.9618 i \\  2.01888 + 18.9618 i\\ 
  0.5 +  22.496 i\\
-47.6872 + 23.3824 i \\ 48.6872 + 23.3824 i\\ 
0.5 +  25.9916 i\\
0.5 + 29.3926 i\\
0.5 + 33.1953 i\\
0.5 + 37.4991 i\\
0.5 +  42.5267 i\\
0.5 + 48.8834 i
\end{split}
\end{equation*}
Finally, the zeros of the $\Psi$-function associated with $\mathcal
H_{11}$ of $1-\lambda$ in the box from $-50$ to $50+50i$:
\begin{align*}
0.236946 + 0.139035 i \\
 0.517157 + 1.64704 i \\
 0.494551 +  2.94259 i \\
 0.501784 + 4.27374 i \\
 0.499487 + 5.50646 i \\
 0.500101 +  7.19582 i \\
 0.499972 + 8.26041 i  \\
 0.500003 + 10.6066 i \\
 0.499998 +  11.0806 i \\
 0.5 + 13.5367 i \\
 0.5 + 15.0404 i \\
 0.5 + 16.5906 i \\
 0.5 +  18.7044 i \\
 -0.00762622 + 21.1219 i \\
 1.00763 + 21.1219 i \\
 -0.423696 + 26.1218 i \\
 1.4237 +  26.1218 i \\
 0.5 + 26.6352 i \\
 0.5 +  31.5323 i \\
 0.5 + 32.6608 i \\
 -1.6787 + 35.8675 i \\
 2.6787 +  35.8675 i \\
 0.119661 + 39.9134 i \\
 0.880339 + 39.9134 i \\
 0.5 + 
 44.4793 i \\
 0.5 + 48.8203 i
\end{align*}

The exceptional zeros suggested we try for hybrids of Hecke operators arising from collections of primes, and we found a sum, which, formally, was an eigenvector
for the Hecke operators for all odd primes, but we could not prove that the series converged.

There are partitions of unity that come from pieces of the Hecke
operator. For example,
\begin{equation*}
  f(\tau)=1-\lambda\left(\dfrac{\tau+a}{p}\right)
\end{equation*}
is a partition of unity if $a$ is even, $p$ is an odd prime and
\begin{equation*}
  a^2=-1 \mbox{ mod } p.
\end{equation*}
This example can be generalized. Such functions are algebraic functions of $\lambda$ although their coefficients grow too quickly to produce a Bessel K expansion of the $\zeta$
function. Another class of  algebraic functions of $\lambda$ that are also partitions
of unity may be obtained using the lemma below. In some cases, these have the advantage that their coefficients go to infinity slowly enough that they may be used to provide Bessel K expansions for the zeta function.

\begin{lemma}\label{AlgebraicPartition}
  Suppose that $R(x)$ and $S(x)$ are polynomials of odd and even
  degrees respectively with real
  coefficients such that 
  \begin{itemize}
  \item $R(1-x)+R(x)=S(x)$
  \item The polynomial $R(x)S'(x)-R'(x)S(x)$ is zero only when $x=0$
    or $x=1$
  \item $S(x)$ has no real zeros when $0< x < 1$
  \item $S(1)=R(1)\neq 0.$
  \end{itemize}
  Then there is a modular function, $\phi$, over a finite index subgroup of the
  $\Lambda$ group which satisfies the following conditions:
  \begin{align*}
    R(\phi(\tau))-\lambda(\tau)S(\phi(\tau)) &= 0 \\
    \phi(\tau)+\phi(-1/\tau) &= 1 \\
    \phi(0) &= 1 \\
    \phi(i\infty) &= 0 \\
    \phi(i) &= 1/2.
  \end{align*}
\end{lemma}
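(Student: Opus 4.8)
The plan is to realize $\phi$ as the algebraic function of $\lambda$ cut out by the rational map $\Phi(x) = R(x)/S(x)$, and to read off every assertion of the lemma from the hypotheses on $R$ and $S$. First I would record the algebraic identities forced by the first hypothesis. Writing $S(x) = R(x) + R(1-x)$ shows at once that $S$ is symmetric about $x = 1/2$, i.e. $S(1-x) = S(x)$, and hence that $\Phi(1-x) = R(1-x)/S(x) = (S(x)-R(x))/S(x) = 1 - \Phi(x)$. Comparing $S(0) = R(0)+R(1)$ with $S(1) = R(1)+R(0)$ and using $S(1) = R(1)$ forces $R(0) = 0$, so $\Phi(0) = 0$, $\Phi(1) = R(1)/S(1) = 1$, and (since $S(1/2) = 2R(1/2)\neq 0$ by the third hypothesis) $\Phi(1/2) = 1/2$. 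These three values already match the cusp and fixed-point values $\lambda(i\infty)=0$, $\lambda(0)=1$, $\lambda(i)=1/2$.

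The central observation is that $\Phi$ is a Belyi-type map: all of its critical values lie in $\{0,1,\infty\}$. Indeed $\Phi'(x) = -(R(x)S'(x)-R'(x)S(x))/S(x)^2$, so by the second hypothesis $\Phi$ has no critical points off $\{0,1\}$, whose images are $0$ and $1$; the only other critical values arise from poles of $\Phi$, i.e. the zeros of $S$, all mapping to $\infty$. Since $\lambda$ maps $\mathbb{H}$ onto $\mathbb{C}\setminus\{0,1\}$ and omits $0,1,\infty$ throughout $\mathbb{H}$, the image $\lambda(\mathbb{H})$ avoids every branch point of the multivalued inverse $\Phi^{-1}$. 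Because $\mathbb{H}$ is simply connected I can therefore lift: along the positive imaginary axis $\lambda$ runs monotonically through $(0,1)$, and the third hypothesis guarantees that $\Phi\colon(0,1)\to(0,1)$ is a pole-free, strictly monotone bijection (critical points only at the endpoints), so a distinguished real branch of $\Phi^{-1}$ exists there. I would take $\phi$ to be the analytic continuation of this branch to all of $\mathbb{H}$, producing a single-valued holomorphic $\phi$ with $\Phi(\phi(\tau)) = \lambda(\tau)$.

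Modularity then follows from the covering picture. Since $\lambda$ is automorphic under the $\Lambda$ group $\Gamma(2)$, for $\gamma\in\Gamma(2)$ the values $\phi(\gamma\tau)$ and $\phi(\tau)$ are two $\Phi$-preimages of the same number $\lambda(\tau)$; the subgroup of $\gamma$ for which they always coincide is the stabilizer of a sheet of the cover defined by $\Phi(x)=\lambda$, which has index $\deg\Phi$ and is thus a finite-index subgroup of the $\Lambda$ group over which $\phi$ is modular. The functional equation is the identity $\Phi(1-x)=1-\Phi(x)$ transported through $\lambda(-1/\tau)=1-\lambda(\tau)$: both $\phi(-1/\tau)$ and $1-\phi(\tau)$ are holomorphic branches of $\Phi^{-1}(1-\lambda(\tau))$, and since they agree at the fixed point $\tau=i$ (where the relation collapses to $\phi(i)=1-\phi(i)$, forcing $\phi(i)=1/2$), analytic continuation gives $\phi(-1/\tau)=1-\phi(\tau)$ on all of $\mathbb{H}$. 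The remaining values drop out: the branch choice gives $\phi(i\infty)=0$, and letting $\tau\to i\infty$ in the functional equation (so $-1/\tau\to 0$) gives $\phi(0)=1$.

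I expect the main obstacle to be the lifting/monodromy step: rigorously producing the single-valued $\phi$ and identifying its invariance group as finite index. The delicate points are the branch behavior at the three cusps, where $\lambda$ limits onto the critical values $0,1,\infty$ and $\Phi$ may ramify, so $\phi$ acquires a local expansion in a fractional power of $q$ that dictates the particular finite-index subgroup and must be shown consistent around each cusp; and the verification that the real branch selected on the imaginary axis continues without ever colliding with the complex zeros of $S$ that the hypotheses permit. Everything else is bookkeeping with the identities established in the first step.
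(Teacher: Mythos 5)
Your proposal is correct and takes essentially the same route as the paper's proof: both realize $\phi$ as the single-valued branch of the root of $R(x)-\lambda(\tau)S(x)=0$ picked out by the monotonicity of $R/S$ on $(0,1)$ and the normalization $\phi(i)=1/2$, with the hypothesis on $R S'-R'S$ ensuring no double roots (branch points) over $\lambda(\mathbb{H})$, and with the functional equation obtained by observing that $1-\phi(-1/\tau)$ satisfies the same algebraic equation as $\phi(\tau)$ and matching them at $\tau=i$. Your covering-space phrasing simply makes explicit two details the paper leaves implicit, namely the unique-lifting argument on the simply connected $\mathbb{H}$ and the finite-index sheet-stabilizer justification for modularity.
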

\begin{proof}
  For any fixed $y$, the equation,
  \begin{equation*}
    R(x)-y S(x) = 0,
  \end{equation*}
  will only have a double root in $x$ when
  \begin{equation*}
    R'(x)-y S'(x) = 0.
  \end{equation*}
  This means that,
  \begin{equation*}
    R'(x)S(x)-R(x)S'(x) = 0,
  \end{equation*}
  so $x$ must be either zero or one. It then follows that $y$ is
  either zero or one.

  Now if we  consider $x$ and $y$ to be functions of $\tau$ with, say, 
  $x=\phi(\tau)$ and 
  $y=\lambda(\tau)$, then there are no double roots in the $x$ variable
  of 
  \begin{equation*}
    R(x)-y S(x) = 0,
  \end{equation*}
  when $\tau$ is in the upper half plane. In addition, since the
  degree of $R(x)$ is at least one more than the degree of $S(x)$, $x$
  cannot go to infinity for $\tau$ in the upper half plane.

  Thus, for any given $\tau$ in the upper half plane, any solution,  $\phi(\tau)$, of the equations
  \begin{equation}\label{phiEquation}
    R(\phi(\tau)) - \lambda(\tau) S(\phi(\tau))=0
  \end{equation}
  can be extended to a single-valued root for all $\tau$ in the upper
  half plane.

  Now we will consider the behavior of
  \begin{equation*}
    \dfrac{R(x)}{S(x)}
  \end{equation*}
  as $x$ goes from zero to one in order to fix attention on a specific
  root of
  \begin{equation*}
    R(\phi(\tau)) - \lambda(\tau) S(\phi(\tau))=0.
  \end{equation*}
  We have
  \begin{equation*}
    \dfrac{R(1)}{S(1)}=1
  \end{equation*}
  and
  \begin{equation*}
    \dfrac{R(0)}{S(0)}=0.
  \end{equation*}
  In addition, for $x$ in the open interval from 0 to 1,
  \begin{equation*}
    \dfrac{d}{dx}\left(\dfrac{R(x)}{S(x)}\right)
    = \dfrac{R'(x)S(x)-R(x)S'(x)}{S(x)^2} \neq 0.
  \end{equation*}
  This means that 
  \begin{equation*}
    \dfrac{R(x)}{S(x)}
  \end{equation*}
  is monotonically increasing for $x$ in the open interval from zero
  to one.  In addition,
  \begin{equation*}
    \dfrac{R(x)}{S(x)}+\dfrac{R(1-x)}{S(1-x)}=1.
  \end{equation*}
  
  So
  \begin{equation*}
    \dfrac{R(1/2)}{S(1/2)}=1/2.
  \end{equation*}
  We thus can choose the root of 
  \begin{equation*}
    R(\phi(\tau)) - \lambda(\tau) S(\phi(\tau))=0,
  \end{equation*}
  with
  \begin{equation}\label{PhiOfI}
    \phi(i) = 1/2.
  \end{equation}

  Now we use~\ref{PhiOfI} to show that the solution, $\phi$,
  of equation~\ref{phiEquation} must be a partition of unity.  The
  conditions of lemma~\ref{AlgebraicPartition} have been crafted so
  that if
  \begin{equation*}
    R(x_0)-y_0 S(x_0) = 0,
  \end{equation*}
  then
  \begin{equation*}
    R(1-x_0)-(1-y_0) S(1-x_0) = 0.
  \end{equation*}
  This means that
  \begin{equation*}
    R(1-\phi(-1/\tau)) - (1-\lambda(-1/\tau)) S(1-\phi(-1/\tau))=0,
  \end{equation*}
  or
  \begin{equation*}
    R(1-\phi(-1/\tau)) - \lambda(\tau) S(1-\phi(-1/\tau))=0.
  \end{equation*}
  Thus, by looking at what happens in a neighborhood of $\tau=i$ we get
  \begin{equation*}
    \phi(\tau)=1-\phi(-1/\tau).
  \end{equation*}
  This completes the proof.
\end{proof}

The algebraic equations
\begin{align*}
    2\phi(\tau)^3-3\phi(\tau)^2+\lambda(\tau) &= 0 \\
    \phi(\tau)^5-\lambda(\tau)(5\phi(\tau)^4-10\phi(\tau)^3+10\phi(\tau)^2-5\phi(\tau)+1)    &= 0 \\
    \begin{aligned}[b]
      2 \phi(\tau)^7&-7\phi(\tau)^5 \\
      &+\lambda(\tau)(35\phi(\tau)^4-70\phi(\tau)^3+63\phi(\tau)^2-28\phi(\tau)+5)
    \end{aligned}
    &=0 \\
    \begin{aligned}[b]
      2\phi(\tau)^{11}&-11\phi(\tau)^{10} \\
    &\begin{aligned}[b]
      +\lambda(\tau)(165\phi(\tau)^8&-600\phi(\tau)^7+1386\phi(\tau)^6 \\
      &-1848\phi(\tau)^5+1650\phi(\tau)^4 \\
      &-990\phi(\tau)^3+385\phi(\tau)^2-88\phi(\tau)+9)
     \end{aligned}
    \end{aligned} &= 0
\end{align*}
satisfy the conditions of lemma~\ref{AlgebraicPartition}. At this
time, we don't know if there are other similar equations that can be
derived using lemma~\ref{AlgebraicPartition} or if a more
sophisticated lemma would produce more roots.

We will now  provide some expanded analysis of the first of these equations:
\begin{equation}\label{CubicEquation}
  2\phi(\tau)^3-3\phi(\tau)^2+\lambda(\tau)=0.
\end{equation}
For $\tau$ in the upper half plane, this equation has no double roots
and thus it defines three single valued functions, $\phi_1$, $\phi_2$
and $\phi_3$, on the upper half
plane.

The partition of unity root of equation~\ref{AlgebraicPartition} is the the root, $\phi_1$, where
\begin{equation*}
  \phi_1(i)=\dfrac{1}{2}.
\end{equation*}
It is easy to see (see
figure~\ref{CubicPlot})
that for the $\phi_1$
root we have
\begin{align*}
  \phi_1(0) & = 1 \\
  \phi_1(\infty) & = 0 \\
  \phi_1(\tau)+\phi_1(-1/\tau) &= 1.
\end{align*}
The other two roots can be characterized by their evaluation at
$\tau=i$:
\begin{align*}
  \phi_2(i) &= \dfrac{1-\sqrt{3}}{2} \\
  \phi_3(i) &= \dfrac{1+\sqrt{3}}{2}.
\end{align*}
It is also easy to see (see
    figure~\ref{CubicPlot})
    that
\begin{align*}
  \phi_2(0)&=-\dfrac{1}{2} \\
  \phi_3(0)&=1 \\
  \phi_2(i\infty) &= 0 \\
  \phi_3(i\infty) &= \dfrac{3}{2} \\
  \phi_2(\tau)+\phi_3(-1/\tau) &= 1.
\end{align*}
From this last identity, we could use $\phi_2$ and $\phi_3$ also to
provide partions of unity.

\begin{figure}
  \includegraphics[width=4.5in]{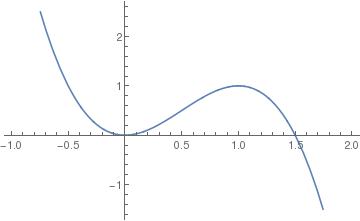}
  \caption{Plot of $3\phi^2-2\phi^3$ for $\phi\in (-1,2)$}\label{CubicPlot}
\end{figure}

The $\lambda$-group ($\Gamma[2]$) permutes $\phi_1$, $\phi_2$ and $\phi_3$. If
\begin{align*}
  \lambda_1 &= \left(\begin{array}{cc} 1 & 2 \\ 0 & 1 \end{array}\right) \\
  \lambda_2 &= \left(\begin{array}{cc} 1 & 0 \\ 2 & 1 \end{array}\right),
\end{align*}
then
\begin{align*}
  \phi_1\circ\lambda_1 &= \phi_2 \\
  \phi_2\circ\lambda_1 &= \phi_1 \\
  \phi_3\circ\lambda_1 &= \phi_3 \\
  \phi_1\circ\lambda_2 &= \phi_3 \\
  \phi_2\circ\lambda_2 &= \phi_2 \\
  \phi_3\circ\lambda_2 &= \phi_1.
\end{align*}

It then follows from combinatorial group theory
that the subgroup of $\Gamma[2]$ that leaves $\phi_1$
invariant is freely generated by the elements
\begin{align*}
  \lambda_1^2         &= \left(\begin{array}{cc} 1 & 4 \\ 0 & 1 \end{array}\right) \\
  \lambda_2^2         &= \left(\begin{array}{cc} 1 & 0 \\ 4 & 1 \end{array}\right) \\
  \lambda_1\lambda_2\lambda_1^{-1} &= \left(\begin{array}{cc} 5 & -8 \\ 2 & -3 \end{array}\right) \\
  \lambda_2\lambda_1\lambda_2^{-1} &= \left(\begin{array}{cc} -3 & 2 \\ -8 & 5 \end{array}\right).
\end{align*}

Simon's method may be applied to $\phi_1$ and
used to
give another BesselK expansion for $\zeta(s)$.
The main term of Simon's formula for the
coefficients of $\phi_1$
gives an order of magnitude of the mth-coefficient of
$\phi_1(q)$ as
\begin{equation*}
   \dfrac{1}{m}\exp{(2\pi\sqrt{m/3})}.
\end{equation*}
This order of magnitude estimate allows the interchange of summation
and integration.

This order of magnitude estimate is consistent with numeric
calculations of the coefficients wherein, for example, the 1000th coefficient of
$\phi_1(q)$ is
\begin{equation*}
  -2.21563*10^{46}
\end{equation*}
and
\begin{equation*}
  \dfrac{1}{1000}exp{\left(2\pi\sqrt{1000/3}\right)}=6.60664*10^{46}.
\end{equation*}

We searched for zeros of $\Psi_{\phi_1}$ inside the box extending to
-20 on the left and +20 on the right and going up from 0 to 40:
\begin{equation*}
\begin{array}{lr}
    -0.196822 + 9.837 i
    & 0.773957 + 12.8053 i \\
    -0.305477 + 19.939 i  &
     1.67883 + 19.3617 i \\
    -1.51671 + 26.1875 i &
     2.14412 + 25.9197 i \\
    -0.729122 +  31.5098 i &
    1.73372 + 31.7993 i\\
   -1.18117 +  36.8074 i &
     2.50435 + 36.6545 i.
\end{array}
\end{equation*}

\bibliography{Modular}{}

\end{document}